\documentclass[hidelinks,onefignum,onetabnum]{siamart220329}
\usepackage{bm}
\usepackage{mathrsfs}
\usepackage{amsfonts,amssymb,oldgerm}
\usepackage{dsfont} 
\usepackage{empheq}
\usepackage{cases}
\usepackage{tikz}


\newcommand\dD{\mathrm{d}}

\newcommand{\LLL}[1]{{\left\vert\kern-0.25ex\left\vert\kern-0.25ex\left\vert #1 
		\right\vert\kern-0.25ex\right\vert\kern-0.25ex\right\vert}}
	

\newcommand\ds{\displaystyle}

\def\eps{\varepsilon }


\newcommand\bx{{\bm x}}
\newcommand\by{{\bm y}}

\newcommand\bv{{\bm v}}
\newcommand\bE{{\mathbf{E}}}
\newcommand\bI{{\mathbf I}}

\newcommand{\R}{{\mathbb R}}
\newcommand{\T}{{\mathbb T}}
\newcommand{\K}{{\mathbb K}}

\newcommand\cA{{\mathcal A}}

\newcommand\cD{{\mathcal D}}
\newcommand\cE{{\mathcal E}}

\newcommand\cM{{\mathcal M}}

\newcommand\scC{{\mathscr C}}


\newcommand{\beq}{\begin{equation}}
\newcommand{\eeq}{\end{equation}}

\newcommand{\be}{\begin{equation}}
\newcommand{\ee}{\end{equation}}
\newcommand{\bl}{\begin{lemma}}
\newcommand{\el}{\end{lemma}}


\definecolor{myblue}{rgb}{0.25,0.50,0.99} 
\definecolor{myred}{rgb}{1.0,0.3,0.2}

\usetikzlibrary{fit}
\usetikzlibrary{arrows}
\numberwithin{equation}{section}
\hypersetup{linkbordercolor=red}



\usepackage{lipsum}
\usepackage{amsfonts}
\usepackage{graphicx}
\usepackage{epstopdf}
\usepackage{algorithmic}
\ifpdf
  \DeclareGraphicsExtensions{.eps,.pdf,.png,.jpg}
\else
  \DeclareGraphicsExtensions{.eps}
\fi


\newsiamremark{remark}{Remark}
\newsiamremark{hypothesis}{Hypothesis}
\crefname{hypothesis}{Hypothesis}{Hypotheses}
\newsiamthm{claim}{Claim}

\headers{Diffusive limit of the Vlasov-Poisson-Fokker-Planck model}{A. Blaustein}

\title{Diffusive limit of the Vlasov-Poisson-Fokker-Planck model:
	quantitative and strong convergence results\thanks{Submitted to the editors October 24, 2022.
\funding{This work was funded by ANR MUFFIN No: ANR-19-CE46-0004.}}}

\author{Alain Blaustein\thanks{Universit\'e Toulouse III Paul Sabatier, 118, route de Narbonne, 31062 Toulouse Cedex 9, France
  (\email{alain.blaustein@math.univ-toulouse.fr}).}
}

\usepackage{amsopn}


\ifpdf
\hypersetup{
  pdfauthor={A. Blaustein}
}
\fi

\begin{document}

\maketitle

\begin{abstract}
This work tackles the diffusive limit for the
Vlasov-Poisson-Fokker-Planck model. We derive \textit{a priori}
estimates which hold without restriction on the phase-space dimension
and propose a strong convergence result in a $L^2$ space. Furthermore,
we strengthen previous results by obtaining an explicit convergence
rate arbitrarily close to the (formal) optimal rate, provided that the
initial data lies in some $L^p$ space with $p$ large enough. Our
result holds on bounded time intervals whose size grow to infinity in
the asymptotic limit with explicit lower bound. The analysis relies on identifying the right set of phase-space coordinates to study the regime of interest. In this set of coordinates the limiting model arises explicitly.
\end{abstract}

\begin{keywords}
Hydrodynamic limit, Vlasov-Poisson-Fokker-Planck system, Drift-Diffusion Poisson model
\end{keywords}

\begin{MSCcodes}
35Q83, 35Q84, 35B40, 45K05
\end{MSCcodes}

\section{Introduction}
\label{sec:1}

\subsection{Physical model and motivation}\label{sec:11}
In this article, we study a plasma composed of moving electrons and fixed ions. We denote by $f^\eps(t,\bx,\bv)\geq0$ the density
of electron at time $t\in\R^+$, position $\bx \in \K^d$ with $\K \in
\left\{\T,\R\right\}$ and velocity $\bv \in \R^d$ whereas the
density of ions is given by $\rho_i(\bx)\geq0$ with $\rho_i \in
L^1\left(\K^d\right)$. We focus on the Vlasov-Poisson-Fokker-Planck
(VPFP) model, which describes a situation where the particles interact
through electrostatic effects and where electrons are subjected to collisions with the ion background. Considering the regime in which the electron/ion mass ratio and the mean free path of electrons have the same magnitude, the VPFP model reads
\begin{equation}\label{kinetic VPFP}
	\left\{
	\begin{array}{l}
		\displaystyle
		\ds\partial_t \, f^\eps
		\,+\,
		\frac{1}{\eps}\,
		\bv\cdot
		\nabla_{\bx}\,
		f^\eps
		\,+\,
		\frac{1}{\eps}\,
		\bE^\eps\cdot
		\nabla_{\bv}\,
		f^\eps
		\,=\,
		\frac{1}{\eps^2}\,
		\nabla_{\bv}\cdot
		\left[\,
		\bv\,
		f^\eps
		\,+\,
		\nabla_{\bv}\,
		f^\eps\,
		\right]
		\,
		,\\[1.5em]
		\displaystyle
		\bE^\eps\,=\,-\,\nabla_{\bx}\,\phi^\eps
		\,,\quad
		-\,\Delta_{\bx}\,
		\phi^\eps
		\,=\,
		\rho^\eps
		-
		\rho_i\,,
		\quad
		\rho^\eps
		\,=\,
		\int_{\R^d}\,
		f^\eps\,\dD\bv\,,\\[1.5em]
		\ds f^\eps(0,\bx,\bv)
		\,=\,
		f^\eps_0(\bx,\bv)\,,
	\end{array}
	\right.
\end{equation}
where the self consistent electric field $\bE^\eps$ is induced by
Coulombian interactions between charges whereas the Fokker-Planck
operator on the right-hand side of the first line in \eqref{kinetic VPFP} accounts for collisions with the ion background. A detailed description of the re-scaling process in order to derive \eqref{kinetic VPFP} may be found in \cite{Filbet_Negulescu22,Poupaud_Soler}. Since mass is conserved along the trajectories of \eqref{kinetic VPFP}, we normalize $f^\eps$ as follows
\[
\int_{
	\K^d\times\R^d
} f^\eps_0\,\dD\bx\,\dD\bv
\,=\,
1\,.
\]
When $\K\,=\,\T$, we also impose the compatibility assumption
\[
\int_{
	\K^d\times\R^d
}f^\eps_0\,\dD\bx\,\dD\bv
\,=\,
\int_{\K^{d}} \rho_i\,\dD\bx\,,
\]
which is then satisfied for all positive time $t\geq0$. In this article, we focus on the asymptotic analysis of \eqref{kinetic VPFP} in the diffusive regime corresponding to the limit $\eps \ll 1$.
\subsection{ Formal derivation}  In this section, we carry out a formal analysis of
the diffusive regime. Our purpose is to
derive the limit of $f^\eps$ as $\eps\rightarrow 0$ and to explain the key idea of this article in order to rigorously
justify this limit. Considering the leading order term in equation
\eqref{kinetic VPFP}, we deduce that as $\eps$ vanishes, $f^\eps$ converges to the following local equilibrium of the Fokker-Planck operator
\begin{equation*}
	f^\eps
	\,
	\underset{\eps\rightarrow 0}{\sim}
	\,
	\rho^\eps\otimes\cM
	\,,
\end{equation*}
where $\cM$ stands for the standard Maxwellian distribution over $\R^d$
\begin{equation}\label{maxwellian}
	\mathcal{M}(\bv)\,=\,
	\left(2\pi
	\right)^{-\,d/2}\,\exp
	\left(
	- \,\frac{1}{2}\,|\bv|^2
	\right),
\end{equation}
and where the spatial distribution $\rho^\eps$ given in \eqref{kinetic VPFP} solves the following equation, obtained after integrating the first line in \eqref{kinetic VPFP} with respect to $\bv$
\[
\partial_t \, \rho^\eps
\,+\,\frac{1}{\eps}\,
\nabla_{\bx}\cdot
\int_{\R^d}
\bv\,
f^\eps\,\dD \bv 
\,=\,0\,.
\]
It is unfortunately trickier to compute the limit of $\rho^\eps$. The difficulty stems from the free transport operator in the first line of \eqref{kinetic VPFP} which induces a stiff dependence with respect to $f^\eps$ in the equation on $\rho^\eps$. The key idea is therefore to cancel this stiff dependence by considering a modified spatial distribution $\pi^\eps$. To this aim, we introduce a re-scaled version $g^\eps$ of $f^\eps$
\[
\left(\tau_{\eps\bv}\,g^\eps\right)(t,\bx
,\bv)
\,=\,
f^\eps(t,\bx
,\bv)\,,\quad\forall\,(t,\bx,\bv)\in\R^+\times\K^{d}\times \R^d\,,
\]
where $\ds\tau_{\bx_0}\,g$ denotes the function obtained by translating any function $g$ in the direction $\ds\bx_0\in \R^d$, that is  
\[
\tau_{\bx_0}\,g(t,\bx,\bv)\,:=\,
g(t,\bx
+\bx_0
,\bv)\,,\quad\forall\,(t,\bx,\bv)\in\R^+\times\K^{d}\times \R^d\,.
\]
Defining the new variable $\by:=\bx+\eps\,\bv$ and operating the change of variable $\bx \rightarrow \by$ in the first line of \eqref{kinetic VPFP}, it turns out that
	$g^\eps$ solves the following equation 
	\[
	\ds\partial_t  g^\eps
	\,+\,
	\left(
	\nabla_{\by}
	+
	\frac{1}{\eps}
	\nabla_{\bv}
	\right)\cdot
	\left[\,
	\tau^{-1}_{\eps\bv}
	\left(\,\bE^\eps
	\,
	f^\eps\right)
	\,\right]
	\,-\,
	\Delta_{\by}\,
	g^\eps
	\,=\,
	\frac{1}{\eps^2}\,
	\nabla_{\bv}\cdot
	\left[\,
	\bv\,
	g^\eps
	+
	\nabla_{\bv}\,
	g^\eps\,
	\right]
	\,+\,
	\frac{2}{\eps}\,
	\nabla_{\bv}\cdot\nabla_{\by}\,g^\eps,
	\]
	where one may notice that the free transport operator has been canceled in the re-scaling process. Therefore, the marginal $\pi^\eps$ of $g^\eps$ defined as
	\[
	\pi^\eps\left(t,
	\by
	\right)
	\,=\,
	\int_{\R^d}
	g^\eps
	\left(t,
	\by
	,\bv
	\right)\dD\bv
	\,=\,
	\int_{\R^d}
	\tau^{-1}_{\eps\bv}
	f^\eps\left(t,
	\by
	,\bv
	\right)\dD\bv\,,\quad\forall \left(t,\by\right) \in\R^+\times\K^d\,,
	\]
	solves the following equation, obtained after integrating the equation on $g^\eps$ with respect to $\bv$
	\begin{equation}\label{eq pi eps}
		\ds\partial_t \, \pi^\eps
		\,+\,
		\nabla_{\by}\cdot
		\left[\,
		\int_{\R^d}
		\tau^{-1}_{\eps\bv}
		\left(\,\bE^\eps\,f^\eps\right)
		\left(t,
		\by
		,\bv
		\right)
		\,
		\dD\bv
		\,\right]
		\,-\,
		\Delta_{\by}\,
		\pi^\eps
		\,=\,
		0
		\,.
\end{equation}
In the limit $\eps\,\rightarrow\,0$, we have
$
\ds
\rho^\eps\,\sim\,\pi^\eps
$ and thus $
\ds
\bE^\eps\,\sim\,\bI^\eps
$, where 
\begin{equation}\label{def I eps}
	\displaystyle
	\bI^\eps\,=\,-\,\nabla_{\by}\,\psi^\eps
	\,,
	\quad
	-\,\Delta_{\by}\,
	\psi^\eps
	\,=\,
	\pi^\eps
	\,-\,
	\rho_i
	\,.
\end{equation}
Furthermore, the translation operator $\tau^{-1}_{\eps\bv}$ in \eqref{eq pi eps} cancels as $\eps \rightarrow 0$. Therefore, the dependence with respect to $f^\eps$ is removed from the nonlinear term in \eqref{eq pi eps}, that is
\[
\int_{\R^d}
\tau^{-1}_{\eps\bv}
\left(\,\bE^\eps\,f^\eps\right)
\left(t,
\by
,\bv
\right)
\,
\dD\bv
\,\sim\,
\bI^\eps\,\pi^\eps\,.
\]
Therefore, we deduce
\begin{equation*}
	\pi^\eps
	\,
	\underset{\eps\rightarrow 0}{\longrightarrow}
	\,
	\rho
	\,,
\end{equation*}
where $\rho$ solves the following drift-diffusion-Poisson equation
\begin{equation}\label{ddP equation}
	\left\{
	\begin{array}{l}
		\displaystyle\partial_t\,\rho\,+\,
		\nabla_{\bx}\cdot
		\left[\,
		\mathbf{E}\,\rho
		\,\right]
		\,-\,
		\Delta_{\bx}\,\rho
		\,=\,0
		\,
		,\\[1.5em]
		\displaystyle
		-\,\Delta_{\bx}\,
		\phi
		\,=\,
		\rho
		\,-\,
		\rho_i\,\,,\quad
		\bE\,=\,-\,\nabla_{\bx}\,\phi
		\,,
	\end{array}
	\right.
\end{equation}
Since we have $\pi^\eps\,\sim\,\rho^\eps$, this leads to the following formal result
\begin{equation*}
	f^\eps
	\,
	\underset{\eps\rightarrow 0}{\longrightarrow}
	\,
	\rho\otimes\cM
	\,.
\end{equation*}
As it turns out, the set of coordinates $(
\by,\bv)$, with $\by:=\bx
+
\eps\,\bv$ is very convenient to study the diffusive regime: it removes from the equation on $\rho^\eps$ the stiff dependence with respect to $f^\eps$ due to the transport operator. Instead, we end up with equation \eqref{eq pi eps} which is close to the limiting model \eqref{ddP equation}. Therefore, the modified spatial distribution $\pi^\eps$ will play a key role in our analysis.

The present work consists in rigorously justifying this formal
derivation. It is the continuation of a long process in order to justify the diffusive limit of the VPFP model.\\
The story starts in \cite{Poupaud_Soler}, in which F. Poupaud and J. Soler demonstrate strong convergence of $f^\eps$ on short time intervals without restriction on the dimension. The present article is somehow close to \cite{Poupaud_Soler} since it uses the same functional framework. Their key idea is to estimate the norm $\LLL{f^\eps}_p$ (defined in Section \ref{sec:main} below), which in turn provides a $L^\infty$-control over the field $\bE^\eps$.  F. Poupaud and J. Soler manage to control $\LLL{f^\eps}_p$ on short time intervals. This estimate allow them to prove strong compactness of the sequence $\left(\rho_{\eps}\right)_{\eps>0}$ and to pass to the limit in the nonlinear term thanks to their $L^\infty$-control over $\bE^\eps$. In the same article, F. Poupaud and J. Soler remove the short time restriction and obtain local in time weak convergence in dimension $d=2$. In this case, the Coulomb potential has a particular structure which allows them to pass to the limit in the nonlinear term. Their strategy was then extended by T. Goudon \cite{Goudon05} who proved similar results for Newtonian interactions when $d=2$. \\
The story goes on in \cite{El_Ghani_Masmoudi}, in which N. El Ghani and N. Masmoudi prove strong and local in time convergence without restriction on the dimension $d$. Their method relies on averaging lemmas \cite{Golse_Lions_Perthame_Sentis88,DiPerna_Lions_Meyer91} to prove strong compactness of the spatial distributions $\left(\rho_{\eps}\right)_{\eps>0}$ associated to free energy solutions $\left(f_{\eps}\right)_{\eps>0}$ to \eqref{kinetic VPFP}. In such a weak regularity framework, the nonlinear term in \eqref{kinetic VPFP} may not be defined. Hence, authors use renormalization techniques introduced by  R. J. Diperna and P.-L. Lions \cite{DiPerna_Lions88} to pass to the limit in the nonlinear term. This approach was initially designed to treat the case of a linearized Boltzmann operator \cite{Masmoudi_Tayeb07}. Since then, it has been extended in various directions including multi-species models \cite{Wu_Lin_Liu15}, Vlasov-Maxwell-Fokker-Planck model \cite{Ghani10} and strongly magnetized plasma models \cite{Herda16}.\\
More recently, authors adapted hypocoercivity methods \cite{Villani:AMS,DMS,herau2017} to the present situation in order to achieve global in time convergence. This is the case of  \cite{Herda_Rodrigues}, in which M. Herda and M. Rodrigues prove global in time strong convergence in weakly nonlinear regime (that is, under joint restrictions on the Debye length and the size of the initial data) when $d=2$. The main difficulty to prove global in time convergence is that the Fokker-Planck operator in \eqref{kinetic VPFP} acts only on the velocity variable and therefore gives no straightforward information regarding the asymptotic behavior of $\rho_\eps$ as $t\rightarrow +\infty$. Hypocoercivity methods rely on constructing of a modified relative entropy functional designed to recover dissipation with respect to the spatial variable. In \cite{Herda_Rodrigues}, M. Herda and M. Rodrigues design such functional, allowing to deduce global in time convergence of the sequence $\left(f^\eps\right)_{\eps>0}$. Using similar methods, exponential relaxation as $t\rightarrow +\infty$ had already been proved in \cite{Herau_Thomann16}, in a weakly nonlinear setting. We also mention \cite{Lanoir/Dolbeault/Xingyu/M.Lazhar}, which proves that the linearized model associated to \eqref{kinetic VPFP} relaxes exponentially fast towards equilibrium as $t\rightarrow +\infty$ with uniform rates in the limit $\eps\rightarrow 0$. Authors deduce the same result on the nonlinear model in the case $d=1$. \\
In perturbative settings, precise results are available \cite{Zhong22}, based on a spectral analysis of \eqref{kinetic VPFP}, treating simultaneously the diffusion limit with explicit convergence rates in $\eps$ and the long time behavior with optimal exponential rate in $t$.

In this article, we propose a strong convergence result in some $L^2$ space
and prove that it occurs at rate $O(\eps^{\beta})$, with
$\beta\,=\,(p-d)/(p-1)$ if $\ds f^\eps_0$ lies in some $\ds L^p$
space: the (formal) optimal convergence rate is reached as
$p\rightarrow +\infty$. Our analysis is non-perturbative and it holds in any dimension $d\geq1$. Our convergence estimate holds on bounded time intervals $[0,T^\eps]$, for some explicitly given $T^\eps$ (see Theorem \ref{main:th}) which satisfies $T^\eps \rightarrow +\infty$ as $\eps \rightarrow 0$. We point out that it should be possible to adapt our analysis to Newtonian interactions as long as the macroscopic model does not develop singularities but we do not follow this path to avoid these issues.

The article is organized as follows: in Section \ref{sec:main}, we give our functional setting and state our main result, in Section \ref{sec:estimates} we derive uniform estimates for the solution to \eqref{kinetic VPFP}, then we conclude with Section \ref{sec:proof} in which we prove our main result.
\section{Functional setting and main result}\label{sec:main}
In the forthcoming analysis, we work with the following norm defined for all exponent $p\geq1$ 
\[
\LLL{
	f}_{p
}
\,=\,
\left(
\int_{\K^d\times\R^d}
\left|\,\frac{f}{\cM}\,\right|^p\cM
\,\dD\bx \,\dD \bv
\right)^{1/p}\,,
\]
and denote by $\ds L^p
\left(
\cM
\right)$ the set of all function whose latter norm is finite. Furthermore, we denote $\ds\left\|\cdot\right\|_{L^{p}}$ the usual norm over $L^p\left(\K^d\right)$.

Existence and uniqueness theory for \eqref{kinetic VPFP} has been
widely investigated and therefore it will not be our concern here. We mention \cite{Degond86} in which global classical solutions are constructed in dimension $d=1,2$, \cite{Kosuke01,Kosuke_Strauss00} which extend this result to dimension $d=3$ in both the friction and frictionless cases and notably prove regularizing effects and thus obtain infinite regularity, \cite{Bouchut93} in which existence and uniqueness of a global strong solution is obtained when $d=3$ and with uniform bounds on the initial data and then \cite{Bouchut95} in which regularizing effects are proved for this weak solution, finally, we mention \cite{Carrillo_Soler95} which treats the case of an initial data in $L^p$ and constructs solutions in dimensions $d=3,4$. Since this article does not require any constraint on the dimension, we consider a strong solution to \eqref{kinetic VPFP} in dimension $d\geq1$. Our main result reads as follows
\begin{theorem}\label{main:th} Consider some exponent $p>d$ and set
	\[ \left(\gamma\,,\,\beta\right)=\left(1-\frac{d}{p}\,,\,\frac{p-d}{p-1}\right).\]
	Suppose that the sequence $
	\left(
	f^\eps_0
	\right)_{\eps\,>\,0} 
	$ meets the following assumption
	\begin{equation}\label{hyp f eps 0}
		\LLL{
			f^\eps_0}_{p
		}
		+
		\sup_{|\bx_0|\,\leq\,1}\left(
		|\bx_0|^{-\beta}\,
		\LLL{
			\tau_{\bx_0}
			f^\eps_0
			-
			f^\eps_0
		}_{p
		}\right)
		+
		\eps^{-\beta}
		\left(
		\left\|
		\pi_0^\eps-
		\rho_0^\eps
		\right\|_{L^{p}
		}+
		\left\|
		\rho_0^\eps
		-
		\rho_0
		\right\|_{L^{p}
		}
		\right)
		\leq\,
		m_p\,,
	\end{equation}
	for some positive constant $m_p$ independent of $\eps$. On top of that, suppose 
	\begin{equation}\label{hyp rho 0 rho i}
		\rho_0\in L^p\cap L^\infty\left(\K^d\right)\,,\quad\textrm{and}\quad\rho_i\in L^{p+1}\cap L^\infty\left(\K^d\right)\,,
	\end{equation}
	and define
	\[
	C_{\rho_0,\rho_i}\,:=\,
	\max{
		\left(
		\|\,\rho_0\,\|^2_{L^{p}}
		\,,\,
		\|\,\rho_i\,\|_{L^{p+1}}^{2-2/p^2}
		\,,\,
		\|\,\rho_0\,\|_{L^{\infty}}
		\,,\,
		\|\,\rho_i\,\|_{L^{\infty}}
		\right)
	}\,.
	\]

	Consider strong solutions $(f^\eps)_{\eps\,>\,0}$ to \eqref{kinetic VPFP} and $\rho$ to \eqref{ddP equation} with initial data $
	\left(
	f^\eps_0
	\right)_{\eps\,>\,0} 
	$ and $\rho_0$ respectively. For all time $T\,>\,0$, there exist two positive constants $C_T$ and $\eps_T$ such that for all $\eps\,\leq\,\eps_T$, it holds
	\[
	\left\|
	f^\eps
	\,-\,
	\rho\,\cM
	\right\|_{L^2
		\left(
		[0,T]\,,\,
		L^{2}
		\left(
		\cM
		\right)
		\right)
	}
	\,\leq\,
	C_T\,\eps^{\beta}
	\,.
	\]
	
	More precisely, there exists a constant $C$ only depending on exponent $p$ and dimension $d$ such that for all $\eps \leq 1$ it holds
	\[
	\left\|
	f^\eps
	\,-\,
	\rho\,\cM
	\right\|_{L^2
		\left(
		[0,t]\,,\,
		L^{2}
		\left(
		\cM
		\right)
		\right)
	}
	\,\leq\,
	\eps^{\beta}
	\left(C\,
	m_p^{2\,p'}\,
	e^{C_{\rho_0,\rho_i}
		C\,
		t}
	\,+\,
	e^{C_{\rho_0,\rho_i}^{-1}C\, m_p^2\,e^{C_{\rho_0,\rho_i}C\,t}}
	\right),
	\]
	where $p'=p/(p-1)$ and for all time $t$ less than $T^\eps$, where $T^\eps$ is defined as
	\[
	T^\eps\,=\,
	\frac{1}{C_{\rho_0,\rho_i}
		\,
		C}\,
	\ln{
		\left(
		1\,+\,
		\frac{C_{\rho_0,\rho_i}}{
			4\,
			\LLL{f^\eps_0}_p^2+
			C\,m_p^2\,\eps^{\gamma\left(1+\frac{2}{p-1}\right)}
		}\,
		\eps^{-\gamma}
		\right)
	}\,,
	\]
	which ensures $\,\,\ds T^\eps\longrightarrow +\infty\,$ as $\,\eps\rightarrow 0$.
\end{theorem}
The main difficulty consists in estimating the nonlinear term $\bE^\eps f^\eps$ in \eqref{kinetic VPFP}. Indeed, if we follow the same method as in \cite{Poupaud_Soler} and take directly its $\LLL{\cdot}_p$-norm, we end up with the following differential inequality for $\LLL{f^\eps}_p$, which blows up in finite time
\[
\frac{\dD}{\dD t}\,\LLL{f^\eps}^p_p
\;\;\lesssim\;\;
\LLL{f^\eps}^{p+2}_p\,.
\]
The key point is therefore to include the re-scaled marginal $\pi^\eps$ in our computations. More precisely, we perform the following decomposition
\[
\bE^\eps\,=\left(\bE^\eps-\bI^\eps\right)+\left(\bI^\eps-\bE\right)+\,\bE\,,
\]
where $\bE$ and $\bI^\eps$ are given in \eqref{ddP equation} and \eqref{def I eps} respectively.
We rely on a functional inequality to prove that $\left(\bE^\eps-\bI^\eps\right)$ is of order $\eps^\gamma\,\LLL{f^\eps}_p$ and estimate $\left(\bI^\eps-\bE\right)$ and  $\bE$ thanks to the properties of drift-diffusion equation \eqref{ddP equation}. It enables to derive the following differential inequality
\[
\frac{\dD}{\dD t}\,\LLL{f^\eps}^p_p
\;\;\lesssim\;\;
\eps^{\gamma}\,\LLL{f^\eps}^{p+2}_p\,.
\]
From the latter inequality, we bound $\LLL{f^\eps}_p$ on time intervals with size of order $\left|\ln{\eps}\right|$ ; this provides a global in time estimate in the limit $\eps \rightarrow 0$.
\section{\textit{A priori} estimates}\label{sec:estimates}
The main object of this section consists in deriving \textit{a priori} estimates for \eqref{kinetic VPFP} in $L^p\left(\cM\right)$ (see \eqref{estimee norme Lp f eps} in Proposition \ref{a:priori:f:eps}). Building on this key estimate, we deduce that $\pi^\eps$ given by \eqref{eq pi eps} converges towards the solution $\rho$ to the macroscopic model \eqref{ddP equation} (see \eqref{estimee norme Lp pi eps rho} in Proposition \ref{a:priori:f:eps}) and prove equicontinuity for solutions to \eqref{kinetic VPFP} (see Proposition \ref{equicontinuity f eps}). Let us first introduce some notations and recall a functional inequality that will be used in the proofs of this section. For all function $\rho\in L^p\cap L^1\left(\K^d\right)$ with $p>1$, which in the case $\K\,=\,\T$ meets the compatibility assumption
\[
\int_{\K^d}\rho\,\dD \bx\,=\,0\,,
\]
there exists a unique solution  $\Delta^{-1}_{\bx}\,\rho$ in $W^{2,p}\left(\K^d\right)$ to the Poisson equation (see \cite[Section $9.6$]{Gilbarg_Trudinger})
\begin{equation*}
	\left\{
	\begin{array}{l}
		\displaystyle
		\Delta_{\bx}\,
		\phi
		\,=\,
		\rho\,,\\[1.5em]
		\ds \int_{\K^d}\phi\,\dD \bx
		\,=\,
		0\,,\quad\text{if}\;\;\K\,=\,\T\,.
	\end{array}
	\right.
\end{equation*}
Furthermore, thanks to Morrey's inequality, we have the following estimate
\begin{equation}\label{Morrey inequality}
	\left\|
	\nabla_{\bx}\,\Delta^{-1}_{\bx}
	\,\rho
	\right\|_{
		\scC^{0,\gamma}
	}
	\,\leq\,
	m_{d,p}
	\left\|
	\rho
	\right\|_{L^{p}}\,,
\end{equation}
for all exponent $p>d$, where the constant $m_{d,p}$ only depends on $(d,p)$ and where $\scC^{0,\gamma}$ stands for the set of bounded, $\gamma$-H\"older functions, with $\gamma
\,=\,
1
\,-\,
d/p$.\\
In this section, we denote by $
\Delta_{p}$ the dissipation of $L^p$-norms due to the Laplace operator
\[
\Delta_{p}
\left[\,
\rho
\,
\right]
\,=\,
(p-1)\,
\int_{\K^d}
\left|\,\nabla_{\bx}\,
\rho\,\right|^2
\left|\,\rho\,\right|^{p-2}
\,\dD\bx\,,
\]
and by $\cD_p$ the dissipation of $L^p\left(\cM\right)$-norms due to the Fokker Planck operator
\[
\cD_{p}
\left[\,
f
\,
\right]
\,=\,
(p-1)
\int_{\K^d\times\R^d}
\left|\,\nabla_{\bv}
\left(
\frac{f}{\cM}\right)\right|^2
\left|\,\frac{f}{\cM}\,\right|^{p-2}\cM
\,\dD\bx \,\dD \bv\,.
\]
We start with the following intermediate result which will be essential in order to propagate $L^p\left(\cM\right)$-norms. 
\begin{lemma}\label{I-E}
	Consider a smooth solution $f^\eps$ to equation \eqref{kinetic VPFP}. For all exponent $p>d$ and all positive $\eps$, it holds
	\[
	\left\|
	\,
	\bE^{\eps}
	-\,
	\bI^\eps\,
	\right\|_{L^{\infty}}
	\,\leq\,
	C_{d,p}\,
	\eps^{\gamma}\,
	\LLL{
		f^\eps}_{p
	}
	\,,
	\quad
	\forall\,t\in\R^+\,,
	\]
	where exponent $\gamma$ is given by $\gamma\,=\,1-
	d\,/\,p
	$, and where $C_{d,p}$ is a constant only depending the dimension $d$ and exponent $p$. In the latter estimates, the electric fields $\bE^\eps$ and $\bI^\eps$ are given by \eqref{kinetic VPFP} and \eqref{def I eps}.
\end{lemma}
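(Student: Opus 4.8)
The quantity to control is $\bE^\eps - \bI^\eps = -\nabla_\bx \Delta_\bx^{-1}(\rho^\eps - \pi^\eps)$, so by the Morrey estimate \eqref{Morrey inequality} it suffices to bound $\|\rho^\eps - \pi^\eps\|_{L^p_\bx}$ by $C_{d,p}\,\eps^\gamma\,\LLL{f^\eps}_p$. The plan is to write this difference explicitly using the definitions: since $\pi^\eps(t,\bx) = \int_{\R^d}\tau_{\eps\bv}^{-1}f^\eps(t,\bx,\bv)\,\dD\bv = \int_{\R^d} f^\eps(t,\bx-\eps\bv,\bv)\,\dD\bv$ while $\rho^\eps(t,\bx) = \int_{\R^d} f^\eps(t,\bx,\bv)\,\dD\bv$, we have
\[
(\rho^\eps - \pi^\eps)(t,\bx) \,=\, \int_{\R^d}\bigl(f^\eps(t,\bx,\bv) - f^\eps(t,\bx-\eps\bv,\bv)\bigr)\,\dD\bv\,.
\]
The integrand is a finite difference in $\bx$ of size $\eps|\bv|$, which one expects to be $O(\eps^\gamma)$ after integrating against the Gaussian weight, since the relevant Sobolev/Morrey embedding gives $\gamma$-Hölder control in $\bx$.

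The key step is the functional inequality making this rigorous. First I would fix $(t,\bv)$ and estimate $\|f^\eps(t,\cdot,\bv) - f^\eps(t,\cdot-\eps\bv,\bv)\|_{L^p_\bx}$; the natural tool is the fractional-type estimate $\|\tau_{\bx_0}h - h\|_{L^p_\bx} \lesssim |\bx_0|^\gamma \|h\|_{W^{\gamma,p}_\bx}$, or more concretely, writing the difference as $\int_0^1 \eps\bv\cdot\nabla_\bx f^\eps(t,\bx - s\eps\bv,\bv)\,\dD s$ is \emph{not} available since we do not control $\nabla_\bx f^\eps$; instead one should interpolate, using only that $f^\eps/\cM \in L^p$. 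The cleanest route is: raise to the $p$-th power, integrate in $\bx$, use Minkowski/Hölder in $\bv$ against $\cM$, and exploit that the map $\bx_0 \mapsto \|\tau_{\bx_0}h - h\|_{L^p}$ has modulus of continuity controlled by the $L^p$ norm of $h$ only up to the loss measured by $\gamma$ — but that requires $h$ to lie in a Sobolev space. I therefore expect the actual argument to go through $\rho^\eps - \pi^\eps$ directly as a single object: integrate in $\bv$ first, so that $\rho^\eps - \pi^\eps = \Div_\bx \bG^\eps$ for a flux $\bG^\eps(t,\bx) = \int_{\R^d}\int_0^1 \eps\,\bv\, f^\eps(t,\bx - s\eps\bv,\bv)\,\dD s\,\dD\bv$, and then bound $\|\bG^\eps\|$ in a negative-regularity space matched to Morrey — i.e. control $\|\nabla_\bx\Delta_\bx^{-1}\Div_\bx \bG^\eps\|_{L^\infty_\bx} = \|\nabla_\bx\Delta_\bx^{-1}\Div_\bx \bG^\eps\|$ by $\|\bG^\eps\|_{L^p_\bx}$ via Calderón–Zygmund plus Morrey, then estimate $\|\bG^\eps\|_{L^p_\bx} \lesssim \eps \int_{\R^d}|\bv|\,\cM(\bv)^{1-1/p}\,\dD\bv\;\LLL{f^\eps}_p$ by Minkowski in $\bv$ together with Hölder (writing $f^\eps = (f^\eps/\cM)\cM$ and splitting $\cM = \cM^{1/p}\cdot\cM^{1-1/p}$). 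This loses a factor $\eps$, not $\eps^\gamma$ — so to recover the sharp exponent one interpolates this $\eps^1$ bound against the trivial $\eps^0$ bound $\|\rho^\eps - \pi^\eps\|_{L^p_\bx}\lesssim \LLL{f^\eps}_p$, with interpolation weight $\gamma$, using that $\gamma\in(0,1)$ for $p>d$; the combinatorics of which norms interpolate correctly is the delicate bookkeeping.

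The main obstacle is precisely pinning down this interpolation so that the exponent comes out exactly $\gamma = 1 - d/p$ rather than something weaker, and ensuring the $\bv$-moments $\int_{\R^d}|\bv|^\theta\,\cM(\bv)^{1-1/p}\,\dD\bv$ that appear are finite and absorbed into $C_{d,p}$ — these are Gaussian moments, hence finite, so this is routine once the structure is right. A secondary technical point: in the torus case $\K = \T$ one must check the compatibility condition $\int_{\T^d}(\rho^\eps - \pi^\eps)\,\dD\bx = 0$ holds (it does, since both integrate to the conserved mass after the change of variables $\bx \mapsto \bx + \eps\bv$ is unimodular), so that $\Delta_\bx^{-1}$ applies. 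Once $\|\rho^\eps - \pi^\eps\|_{L^p_\bx} \leq C_{d,p}\,\eps^\gamma\,\LLL{f^\eps}_p$ is established, \eqref{Morrey inequality} applied to $\rho = \rho^\eps - \pi^\eps$ gives $\|\bE^\eps - \bI^\eps\|_{L^\infty_\bx} \leq \|\nabla_\bx\Delta_\bx^{-1}(\rho^\eps-\pi^\eps)\|_{\scC^{0,\gamma}} \leq m_{d,p}\|\rho^\eps - \pi^\eps\|_{L^p_\bx} \leq m_{d,p}C_{d,p}\,\eps^\gamma\,\LLL{f^\eps}_p$, which is the claim with $C_{d,p}$ renamed.
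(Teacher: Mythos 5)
Your reduction is aimed at the wrong quantity, and the fallback you sketch to fix it rests on a false boundedness claim. The paper does not (and cannot) prove $\left\|\rho^\eps-\pi^\eps\right\|_{L^p_\bx}\lesssim \eps^\gamma\,\LLL{f^\eps}_p$: as you yourself note, a translation difference $\tau_{\bx_0}h-h$ has no quantitative modulus of continuity in $L^p$ if all you know is $h\in L^p$, and Lemma \ref{I-E} assumes nothing beyond $\LLL{f^\eps}_p<\infty$ (the equicontinuity hypothesis \eqref{hyp f eps 0} is not invoked here). Your escape route via $\rho^\eps-\pi^\eps=\Div_\bx\bG^\eps$ then needs $\nabla_\bx\Delta_\bx^{-1}\Div_\bx:L^p_\bx\to L^\infty_\bx$; but this composition is a zeroth-order Calder\'on--Zygmund operator (a matrix of Riesz transforms), bounded on $L^p$ and \emph{not} into $L^\infty$ — Morrey's embedding requires the gain of one full derivative, which $\Delta^{-1}\nabla\Div$ does not provide. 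Even granting a corrected version, the interpolation you defer to ``delicate bookkeeping'' would, in the Sobolev scale you indicate, land you at the critical embedding $W^{d/p,p}\not\hookrightarrow L^\infty$ and hence only give $\eps^{\gamma-\delta}$ for $\delta>0$, not the endpoint exponent $\gamma$.

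The idea you are missing is to apply the smoothing operator \emph{before} taking the finite difference, at fixed $\bv$. Since $\nabla_\bx\Delta_\bx^{-1}$ commutes with translations, for each $\bv$ one has $\nabla_\bx\Delta_\bx^{-1}\bigl[f^\eps-\tau^{-1}_{\eps\bv}f^\eps\bigr](t,\cdot,\bv)=g-\tau^{-1}_{\eps\bv}g$ with $g=\nabla_\bx\Delta_\bx^{-1}f^\eps(t,\cdot,\bv)$; Morrey's inequality \eqref{Morrey inequality} puts $g$ in $\scC^{0,\gamma}$ with seminorm controlled by $\|f^\eps(t,\cdot,\bv)\|_{L^p_\bx}$, and the H\"older modulus of continuity of $g$ gives the factor $|\eps\bv|^\gamma$ exactly — no interpolation, no endpoint loss. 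One then integrates in $\bv$ and applies H\"older against the Gaussian weight to replace $\int_{\R^d}\|f^\eps(t,\cdot,\bv)\|_{L^p_\bx}|\bv|^\gamma\,\dD\bv$ by a Gaussian moment times $\LLL{f^\eps}_p$. You came close when you wrote down the fractional estimate $\|\tau_{\bx_0}h-h\|_{L^p}\lesssim|\bx_0|^\gamma\|h\|_{W^{\gamma,p}}$ and discarded it because $f^\eps$ has no Sobolev regularity: the point is that the regularity is supplied not by $f^\eps$ but by the inverse Laplacian, and it is precisely one derivative, which is what makes $\gamma=1-d/p$ come out sharp.
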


\begin{proof}
	We consider some positive $\eps$ and some $(t,\bx) \in \R^+\times\K^{d}$. We have
	\[
	\left(
	\rho^\eps
	\,-\,
	\pi^\eps\right)(t,\bx)
	\,=\,
	\int_{\R^d}
	\left(f^\eps
	\,-\,
	\tau_{\eps\,\bv}^{-1}f^\eps\right)(t,\bx,\bv)
	\,\dD\bv
	\,.
	\]
	Applying the operator $\ds\nabla_{\bx}\,\Delta^{-1}_{\bx}$ to the latter relation and taking the supremum over all $\bx$ in $\K^{d}$, we obtain
	\[
	\left\|\,
	\bE^{\eps}-\,
	\bI^\eps\,
	\right\|_{L^{\infty}}
	\,\leq\,
	\int_{\R^d}
	\left\|\,
	\nabla_{\bx}\,\Delta^{-1}_{\bx}\,
	\left[f^\eps
	\,-\,
	\tau_{\eps\,\bv}^{-1}f^\eps\right](t,\cdot,\bv)
	\,
	\right\|_{L^{\infty}}
	\,\dD\bv
	\,.
	\]
	To estimate $\ds\left(f^\eps
	\,-\,
	\tau_{\eps\,\bv}^{-1}f^\eps\right)(t,\cdot,\bv)$, we notice that for each $\bv \in \R^d$, it holds
	\[
	\left\|\,
	\nabla_{\bx}\,\Delta^{-1}_{\bx}\,
	\left[f^\eps
	\,-\,
	\tau_{\eps\,\bv}^{-1}f^\eps\right](t,\cdot,\bv)
	\,
	\right\|_{L^{\infty}}
	\,\leq\,
	\left|\eps\,\bv\right|^{\gamma}
	\left\|\,
	\nabla_{\bx}\,\Delta^{-1}_{\bx}\,
	f^\eps
	(t,\cdot,\bv)
	\,
	\right\|_{\scC^{0,\gamma}}\,,
	\]
	and therefore apply Morrey's inequality \eqref{Morrey inequality} which yields
	\[
	\left\|\,
	\bE^{\eps}-\,
	\bI^\eps\,
	\right\|_{L^{\infty}}
	\,\leq\,
	m_{d,p}\,\eps^{\gamma}\,
	\int_{\R^d}
	\left\|
	f^\eps(t,\cdot,\bv)
	\right\|_{L^{p}}
	\,|\bv|^{\gamma}
	\,\dD\bv
	\,.
	\]
	Then we rewrite the latter inequality as follows 
	\[
	\left\|\,
	\bE^{\eps}-\,
	\bI^\eps\,
	\right\|_{L^{\infty}}
	\,\leq\,
	m_{d,p}\,\eps^{\gamma}\,
	\int_{\R^d}
	\left(\,
	\int_{\K^d}
	\left|
	\frac{
		f^\eps(t,\bx,\bv)
	}{\cM(\bv)}
	\right|^p
	\,\dD\bx
	\,
	\right)^{\frac{1}{p}}
	\,|\bv|^{\gamma}\,
	\cM(\bv)
	\,\dD\bv
	\,.
	\]
	Applying H\"older's inequality to the latter relation, we deduce the result
	\[
	\left\|\,
	\bE^{\eps}-\,
	\bI^\eps\,
	\right\|_{L^{\infty}}
	\,\leq\,
	m_{d,p}\,\eps^{\gamma}\,
	\left(\,
	\int_{\R^d}
	\,|\bv|^{\frac{p\,\gamma}{p-1}}\,
	\cM(\bv)
	\,\dD\bv\,
	\right)^{\frac{p-1}{p}}\,
	\LLL{
		f^\eps}_{p
	}
	\,.
	\]
\end{proof}
We now provide estimates for the macroscopic model \eqref{ddP equation}
\begin{proposition}
	\label{estimee rho}
	Consider a smooth solution $\rho$ to equation \eqref{ddP equation}. For all exponent $p$, lying in $(1,+\infty)$, it holds
	\[
	\left\|\,
	\rho(t)\,
	\right\|_{L^{p}
	}
	\,\leq\,
	\max{
		\left(
		\|\,\rho_0\,\|_{L^{p}}
		\,,\,
		\|\,\rho_i\,\|_{L^{p+1}}^{1-1/p^2}
		\right)
	}\,,
	\quad
	\forall\,t\in\R^+\,.
	\]
	Furthermore, it holds
	\[
	\left\|\,
	\rho(t)\,
	\right\|_{L^{\infty}
	}
	\,\leq\,
	\max{
		\left(
		\|\,\rho_0\,\|_{L^{\infty}}
		\,,\,
		\|\,\rho_i\,\|_{L^{\infty}}
		\right)
	}\,,
	\quad
	\forall\,t\in\R^+\,.
	\]
	where $\rho_i$ is given in \eqref{ddP equation}.
\end{proposition}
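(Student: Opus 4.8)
The plan is to run the classical energy / maximum-principle estimates for the drift-diffusion equation \eqref{ddP equation}, the one structural input being $\nabla_\bx\cdot\bE=-\Delta_\bx\phi=\rho-\rho_i$.

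First I would record two elementary properties of the smooth solution $\rho$. It stays non-negative: writing \eqref{ddP equation} as $\partial_t\rho-\Delta_\bx\rho+\bE\cdot\nabla_\bx\rho=-(\rho-\rho_i)\rho$ and testing against the negative part $\rho_-=\max(-\rho,0)$, the transport term combines with the cubic term (via $\nabla_\bx\cdot\bE=\rho-\rho_i$) so that, after discarding the dissipation $\|\nabla_\bx\rho_-\|_{L^2_\bx}^2\ge0$ and bounding $\|\rho_-(t)\|_{L^\infty_\bx}\le\|\rho(t)\|_{L^\infty_\bx}$, one gets $\tfrac{1}{2}\tfrac{\dD}{\dD t}\|\rho_-\|_{L^2_\bx}^2\le\tfrac{1}{2}\big(\|\rho(t)\|_{L^\infty_\bx}+\|\rho_i\|_{L^\infty_\bx}\big)\|\rho_-\|_{L^2_\bx}^2$; since $\rho$ is smooth the prefactor is locally bounded in $t$, and a Gr\"onwall argument with $\rho_-(0)\equiv0$ (because $\rho_0\ge0$) gives $\rho\ge0$ for all $t$. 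It also conserves mass: integrating \eqref{ddP equation} in $\bx$ gives $\|\rho(t)\|_{L^1_\bx}=\|\rho_0\|_{L^1_\bx}$, which the normalization inherited from $f^\eps_0$ fixes to be (at most) $1$.

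Then I would derive the basic $L^p$ differential inequality. Multiplying \eqref{ddP equation} by $p\,\rho^{p-1}$ and integrating over $\K^d$: the diffusion term contributes $-p\,\Delta_p[\rho]\le0$, which I discard; the drift term, integrated by parts with $\nabla_\bx(\rho^p)=p\,\rho^{p-1}\nabla_\bx\rho$, contributes $-(p-1)\int_{\K^d}(\rho-\rho_i)\rho^p\,\dD\bx$; and since $\rho\ge0$, bounding $\int\rho_i\rho^p$ by H\"older with exponents $(p+1,\tfrac{p+1}{p})$ leaves
\[
\frac{\dD}{\dD t}\,\|\rho\|_{L^p_\bx}^p
\;\le\;
(p-1)\,\|\rho\|_{L^{p+1}_\bx}^{p}\Big(\|\rho_i\|_{L^{p+1}_\bx}-\|\rho\|_{L^{p+1}_\bx}\Big)\,.
\]
Interpolating $L^p_\bx$ between $L^1_\bx$ and $L^{p+1}_\bx$ with weight $\theta=1/p^2$ and using $\|\rho\|_{L^1_\bx}\le1$ yields $\|\rho\|_{L^p_\bx}\le\|\rho\|_{L^{p+1}_\bx}^{1-1/p^2}$, so whenever $\|\rho(t)\|_{L^p_\bx}>\|\rho_i\|_{L^{p+1}_\bx}^{1-1/p^2}$ one has $\|\rho(t)\|_{L^{p+1}_\bx}>\|\rho_i\|_{L^{p+1}_\bx}$ and the right-hand side above is $<0$. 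A standard continuation argument (were $\|\rho(t)\|_{L^p_\bx}^p$ to exceed $\max\big(\|\rho_0\|_{L^p_\bx}^p,\|\rho_i\|_{L^{p+1}_\bx}^{p-1/p}\big)$, take the last earlier time at which equality holds; in between $\|\rho\|_{L^p_\bx}^p$ would be strictly decreasing, a contradiction) then gives the first stated bound. For the $L^\infty_\bx$ bound I would send $p\to+\infty$ in it: $\rho_0$, $\rho_i$ and $\rho(t)$ all lie in $L^1_\bx\cap L^\infty_\bx$ (for $\rho(t)$ this is mass conservation together with the bound just proved), so $\|\rho(t)\|_{L^p_\bx}\to\|\rho(t)\|_{L^\infty_\bx}$, $\|\rho_0\|_{L^p_\bx}\to\|\rho_0\|_{L^\infty_\bx}$ and $\|\rho_i\|_{L^{p+1}_\bx}^{1-1/p^2}\to\|\rho_i\|_{L^\infty_\bx}$; alternatively the $L^\infty$ bound follows directly from the maximum principle, since at a spatial maximum of $\rho(t,\cdot)$ one has $\partial_t\rho\le-\big(\|\rho(t)\|_{L^\infty_\bx}-\|\rho_i\|_{L^\infty_\bx}\big)\|\rho(t)\|_{L^\infty_\bx}$.

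Nothing here is genuinely hard; the only point that needs care is producing the sharp exponent $1-1/p^2$, which is exactly why the interpolation must be performed against the \emph{conserved} $L^1$ mass rather than against $L^{p+1}$ alone, and, relatedly, why the non-negativity of $\rho$ has to be secured first so that $\int(\rho-\rho_i)\rho^p$ can be controlled from below by $\|\rho\|_{L^{p+1}_\bx}^{p+1}$ minus a single absorbable term.
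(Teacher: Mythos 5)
Your proposal is correct and follows essentially the same route as the paper's Appendix~A proof: multiply by $\rho^{p-1}$, integrate the drift term by parts using $\nabla_{\bx}\cdot\bE=\rho-\rho_i$, apply H\"older to $\int\rho_i\,\rho^p$, interpolate $\|\rho\|_{L^p_{\bx}}\le\|\rho\|_{L^{p+1}_{\bx}}^{1-1/p^2}$ against the unit $L^1$ mass (the paper phrases this as Jensen for a probability measure), and conclude by a monotonicity argument before sending $p\to+\infty$. Your explicit preliminary verification of non-negativity and mass conservation, and your continuation argument in place of the paper's supersolution argument, are only cosmetic differences.
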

We postpone the proof of this result to Appendix \ref{proof:prop:rho} since it is not the main point in our analysis.\\

We turn to the main result of this section, in which we provide estimates in $L^p\left(\cM\right)$ for the solution $f^\eps$ to \eqref{kinetic VPFP}. As a direct consequence, we derive convergence estimates for $\pi^\eps$ and $f^\eps$ respectively towards $\rho$ and the local equilibrium $\rho^\eps\,\cM$.
\begin{proposition}\label{a:priori:f:eps}
	Consider some exponent $p>d$ and set $\ds\gamma\,=\, 1\,-\,d\,/\,p$. Let $\left(f^\eps\right)_{\eps>0}$ be a sequence of smooth solutions to \eqref{kinetic VPFP}  whose initial conditions meet assumption \eqref{hyp f eps 0} and $\rho$ be a smooth solution to \eqref{ddP equation} whose initial condition meets \eqref{hyp rho 0 rho i}. There exists a constant $C$ only depending on exponent $p$ and dimension $d$ such that for all $\eps\leq 1$, and for all $t$ less than $T^\eps$ (where $T^\eps$ is given in Theorem \ref{main:th})
	\begin{enumerate}
		\item\label{estimee norme Lp f eps} it holds
		\[
		\LLL{f^\eps}_p\,\leq\,2
		\left(\LLL{f^\eps_0}_p\,+\,
		\eps^{\gamma\left(\frac{1}{2}\,+\,\frac{1}{p-1}\right)}\,
		C\,m_p\right)
		e^{C_{\rho_0,\rho_i}
			C\,
			t}
		\,,
		\]
		where $m_p$ and $C_{\rho_0,\rho_i}$ are respectively defined in \eqref{hyp f eps 0} and \eqref{hyp rho 0 rho i} ;
		\item\label{estimee norme Lp pi eps rho} it holds
		\begin{equation*}
			\left\|\,
			\pi^\eps
			\,-\,
			\rho\,
			\right\|_{L^{2}
			}(t)
			\,\leq\,
			\eps^{\beta}\,
			C\,
			m_p^{2\,p'}\,
			e^{C_{\rho_0,\rho_i}
				\,
				C\,
				t}
			\,,
		\end{equation*}
		where $\beta\,=\,(p-d)\,/\,(p-1)$ and $p'=p/(p-1)$;
		\item\label{estimee f rho eps M L2}
		it holds
		\begin{equation*}
			\left\|
			f^\eps
			\,-\,
			\rho^\eps\,\cM
			\right\|_{L^2
				\left(
				[0,t]\,,\,
				L^{2}
				\left(
				\cM
				\right)
				\right)
			}
			\,\leq\,
			\eps\,
			C\,
			m_p^2
			\,
			e^{C_{\rho_0,\rho_i}
				C\,
				t}
			\,.
		\end{equation*}
	\end{enumerate}
\end{proposition}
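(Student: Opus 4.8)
The plan is to prove the $L^p(\cM)$ bound \eqref{estimee norme Lp f eps} first and to read off \eqref{estimee norme Lp pi eps rho} and \eqref{estimee f rho eps M L2} from the very same computation; since the control of $\pi^\eps-\rho$ re-enters the estimate for $\LLL{f^\eps}_p$ through the middle piece $\bI^\eps-\bE$ of the field, I would close the two of them at once, inside a single continuation argument on $[0,T^\eps]$. Set $u^\eps=f^\eps/\cM$ and differentiate $\LLL{f^\eps}_p^p=\int_{\K^d\times\R^d}(u^\eps)^p\,\cM$ along \eqref{kinetic VPFP} (i.e. test it against $p\,(u^\eps)^{p-1}$). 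Two simplifications occur: the transport term contributes $\tfrac1\eps\int\bv\cdot\nabla_\bx\big((u^\eps)^p\big)\cM$, which vanishes by integration by parts in $\bx$; and the Fokker--Planck operator, after one integration by parts in $\bv$ and the identity $\bv f^\eps+\nabla_\bv f^\eps=\cM\,\nabla_\bv u^\eps$, produces exactly $-\tfrac{p}{\eps^2}\,\cD_p[f^\eps]$. One is left with the identity
\[
\frac{\dD}{\dD t}\LLL{f^\eps}_p^p+\frac{p}{\eps^2}\,\cD_p[f^\eps]=\frac{p-1}{\eps}\int_{\K^d\times\R^d}(u^\eps)^p\,\bE^\eps\!\cdot\!\bv\;\cM\,\dD\bx\,\dD\bv ,
\]
so everything rests on the stiff right-hand side.

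Here is the mechanism that removes the $1/\eps$. Since $\bv\,\cM=-\nabla_\bv\cM$, a further integration by parts in $\bv$ rewrites the right-hand side as $\tfrac{2(p-1)}{\eps}\int\bE^\eps\cdot(u^\eps)^{p/2}\,\nabla_\bv\big((u^\eps)^{p/2}\big)\cM$; by Cauchy--Schwarz it is at most $\tfrac{C_p}{\eps}\,\|\bE^\eps\|_{L^\infty_\bx}\,\LLL{f^\eps}_p^{p/2}\,\cD_p[f^\eps]^{1/2}$, and Young's inequality, balanced against the dissipation, bounds it by $\tfrac{p}{2\eps^2}\cD_p[f^\eps]+C_p\,\|\bE^\eps\|_{L^\infty_\bx}^2\,\LLL{f^\eps}_p^{p}$. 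Absorbing the first term, the $1/\eps$ is gone:
\[
\frac{\dD}{\dD t}\LLL{f^\eps}_p^p+\frac{p}{2\eps^2}\,\cD_p[f^\eps]\ \le\ C_p\,\|\bE^\eps\|_{L^\infty_\bx}^2\,\LLL{f^\eps}_p^{p} ,
\]
which is precisely where the Fokker--Planck dissipation pays for the transport scaling; it is also the inequality I re-use with $p=2$ below. Now split $\bE^\eps=(\bE^\eps-\bI^\eps)+(\bI^\eps-\bE)+\bE$: Lemma \ref{I-E} gives $\|\bE^\eps-\bI^\eps\|_{L^\infty_\bx}\le C_{d,p}\,\eps^{\gamma}\LLL{f^\eps}_p$; Morrey's inequality \eqref{Morrey inequality} applied to $-\Delta_\bx(\psi^\eps-\phi)=\pi^\eps-\rho$ gives $\|\bI^\eps-\bE\|_{L^\infty_\bx}\lesssim\|\pi^\eps-\rho\|_{L^p_\bx}$; and Morrey together with Proposition \ref{estimee rho} gives $\|\bE\|_{L^\infty_\bx}^2\lesssim C_{\rho_0,\rho_i}$. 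Feeding in the bound on $\pi^\eps-\rho$ established below (which makes the middle piece lower order for $\eps$ small), one ends with a Bernoulli-type inequality $\tfrac{\dD}{\dD t}\LLL{f^\eps}_p^p\lesssim C_{\rho_0,\rho_i}\LLL{f^\eps}_p^p+\eps^{\gamma}\LLL{f^\eps}_p^{p+2}$. Integrating it (equivalently, solving the linear ODE satisfied by $\LLL{f^\eps}_p^{-2}$) yields \eqref{estimee norme Lp f eps} for $t$ below the explicit logarithmic time $T^\eps$ of the statement, and a standard continuation closes the bootstrap.

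For \eqref{estimee norme Lp pi eps rho}, subtract \eqref{ddP equation} from \eqref{eq pi eps} and run an $L^2_\bx$ energy estimate for $\pi^\eps-\rho$, absorbing the dissipation $\|\nabla_\bx(\pi^\eps-\rho)\|_{L^2_\bx}^2$ by Young. Using $\tau_{\eps\bv}^{-1}(\bE^\eps f^\eps)(t,\bx,\bv)=\bE^\eps(t,\bx-\eps\bv)\,g^\eps(t,\bx,\bv)$, the flux difference $\int\tau_{\eps\bv}^{-1}(\bE^\eps f^\eps)\,\dD\bv-\bE\rho$ splits into
\[
\int_{\R^d}\!\big(\bE^\eps(t,\cdot-\eps\bv)-\bE^\eps(t,\cdot)\big)g^\eps\,\dD\bv\ +\ (\bE^\eps-\bI^\eps)\,\pi^\eps\ +\ (\bI^\eps-\bE)\,\pi^\eps\ +\ \bE\,(\pi^\eps-\rho) ,
\]
where the first term is $O(\eps)$ via the $L^2$ modulus of continuity of $\bE^\eps$ (through $\|\nabla_\bx\bE^\eps\|_{L^p_\bx}\lesssim\|\rho^\eps-\rho_i\|_{L^p_\bx}$ and the velocity moments of $f^\eps$), the second is $O(\eps)$ by the same device applied to $\rho^\eps-\pi^\eps=\int(f^\eps-\tau_{\eps\bv}^{-1}f^\eps)\,\dD\bv$ (so that $\|\bE^\eps-\bI^\eps\|_{L^2_\bx}\lesssim\eps\,\LLL{f^\eps}_2$), and the last two are absorbed by Gr\"onwall using $\|\bI^\eps-\bE\|_{L^\infty_\bx}\lesssim\|\pi^\eps-\rho\|_{L^p_\bx}$ (bounded on $[0,T^\eps]$ by the previous step), $\|\bI^\eps-\bE\|_{L^2_\bx}\lesssim\|\pi^\eps-\rho\|_{L^2_\bx}$ and Proposition \ref{estimee rho}. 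The initial discrepancy $(\pi^\eps-\rho)(0)$ is $O(\eps^{\beta})$: it is $\|\rho_0^\eps-\rho_0\|_{L^p_\bx}$ (which is $\le m_p\eps^{\beta}$ by \eqref{hyp f eps 0}) plus $\big\|\int(f_0^\eps-\tau_{\eps\bv}^{-1}f_0^\eps)\,\dD\bv\big\|$, the latter controlled by the translation-regularity assumption in \eqref{hyp f eps 0} (admissible shifts $\eps|\bv|\le1$, the Gaussian tail $|\bv|>1/\eps$ being negligible); this $\eps^{\beta}$ is the actual bottleneck, the dynamics contributing only $O(\eps)\le O(\eps^{\beta})$. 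Finally \eqref{estimee f rho eps M L2} is immediate: $f^\eps-\rho^\eps\cM=\cM\big(u^\eps-\overline{u^\eps}\big)$ with $\overline{u^\eps}:=\int u^\eps\cM\,\dD\bv=\rho^\eps$, so the Gaussian Poincar\'e inequality in $\bv$ (applied for each $\bx$, then integrated) gives $\|f^\eps-\rho^\eps\cM\|_{L^2(\cM)}^2\le\cD_2[f^\eps]$; integrating the $p=2$ instance $\tfrac{\dD}{\dD t}\LLL{f^\eps}_2^2+\tfrac1{\eps^2}\cD_2[f^\eps]\lesssim\|\bE^\eps\|_{L^\infty_\bx}^2\LLL{f^\eps}_2^2$ in time, the stiff term $\tfrac1{\eps^2}\int_0^t\cD_2[f^\eps]$ sits on the left while the right-hand side is $O(1)$ ($\|\bE^\eps\|_{L^\infty_\bx}$ now already under control), whence $\int_0^t\cD_2[f^\eps]\lesssim\eps^2(\cdots)$ and the $O(\eps)$ rate.

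The crux, and essentially the only real difficulty, is the stiff field term: taking the $\LLL{\cdot}_p$-norm of \eqref{kinetic VPFP} naively produces $\tfrac{\dD}{\dD t}\LLL{f^\eps}_p^p\lesssim\LLL{f^\eps}_p^{p+2}$, which blows up in $O(1)$ time, and escaping it requires the combination of (a) using the Fokker--Planck dissipation $\cD_p$ to absorb the $1/\eps$ in the field term, and (b) working in the coordinates $\bx+\eps\bv$, which replace $\rho^\eps$ by $\pi^\eps$ and make $\bE^\eps-\bI^\eps$ genuinely of size $\eps^{\gamma}\LLL{f^\eps}_p$ through Lemma \ref{I-E}; together these turn the nonlinearity into $\eps^{\gamma}\LLL{f^\eps}_p^{p+2}$ and buy the logarithmically long time $T^\eps$. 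The remaining, more bookkeeping-heavy, point is that \eqref{estimee norme Lp f eps} and \eqref{estimee norme Lp pi eps rho} are coupled through $\bI^\eps-\bE$, so one must track all constants carefully and run both estimates inside the same continuation argument in order to extract the explicit $T^\eps$ and the exponential factors stated.
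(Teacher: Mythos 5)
Your overall architecture is the paper's: the same $L^p(\cM)$ energy identity with the $1/\eps$ field term absorbed into the Fokker--Planck dissipation by Young's inequality, the same three-fold splitting $\bE^\eps=(\bE^\eps-\bI^\eps)+(\bI^\eps-\bE)+\bE$ handled by Lemma \ref{I-E} and Morrey's inequality, and the same route (integrated dissipation plus Gaussian--Poincar\'e) for item \eqref{estimee f rho eps M L2}. However, there are two genuine gaps, both located in item \eqref{estimee norme Lp pi eps rho} and in the way it feeds back into item \eqref{estimee norme Lp f eps}.

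First, your loop does not close as set up. To make the middle piece of the field harmless in the $f^\eps$ estimate you invoke $\|\bI^\eps-\bE\|_{L^\infty_\bx}\lesssim\|\pi^\eps-\rho\|_{L^p_\bx}$, which is a Morrey bound and genuinely requires the $L^p_\bx$ norm with $p>d$; yet the only estimate you propose to prove for $\pi^\eps-\rho$ is an $L^2_\bx$ energy estimate. If you fall back on mere boundedness of $\|\pi^\eps-\rho\|_{L^p_\bx}$ (via $\|\pi^\eps\|_{L^p_\bx}\le\LLL{f^\eps}_p$ and Proposition \ref{estimee rho}), the middle piece contributes a term of size $\LLL{f^\eps}_p^{p+2}$ with no factor $\eps^\gamma$, and you are back to the differential inequality that blows up in $O(1)$ time. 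What is needed is \emph{smallness} of $\|\pi^\eps-\rho\|_{L^p_\bx}$, of order $\eps^{\gamma}$, proved simultaneously with the $f^\eps$ bound. The paper does exactly this by running the $\pi^\eps-\rho$ estimate in $L^p_\bx$ (testing against $(\pi^\eps-\rho)|\pi^\eps-\rho|^{p-2}$) and coupling the two quantities in the single functional $u=\LLL{f^\eps}_p^2+\eps^{-\gamma}\|\pi^\eps-\rho\|_{L^p_\bx}^2$, for which one closed Bernoulli inequality $u'\le C(\eps^\gamma u^2+C_{\rho_0,\rho_i}\,u)$ holds; the weight $\eps^{-\gamma}$ is precisely what converts the cross term $\|\pi^\eps-\rho\|^2_{L^p_\bx}\LLL{f^\eps}_p^2$ into $\eps^\gamma u^2$. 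Your ``standard continuation'' could likely be organized, but only after the $L^2_\bx$ estimate is replaced by an $L^p_\bx$ one; as written, the bootstrap hypothesis is never reproduced.

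Second, the rate $\eps^\beta$. You attribute it entirely to the initial data and claim the dynamical error terms are $O(\eps)$, via $\|\nabla_\bx\bE^\eps\|_{L^p_\bx}\lesssim\|\rho^\eps-\rho_i\|_{L^p_\bx}$ and an ``$L^2$ modulus of continuity'' of $\bE^\eps$. With $\rho^\eps$ controlled only in $L^p_\bx$, the field $\bE^\eps$ is only $\scC^{0,\gamma}$ by Morrey's inequality, so the translation errors $\bE^\eps(\cdot-\eps\bv)-\bE^\eps$ are $O(\eps^\gamma)$, not $O(\eps)$; an $O(\eps)$ bound would require $\nabla_\bx\bE^\eps\in L^\infty_\bx$, which is not available in this framework. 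Since $\gamma<\beta$, a direct energy estimate only yields the rate $\eps^\gamma$. The paper recovers $\eps^\beta$ by a final interpolation of $\|\pi^\eps-\rho\|_{L^2_\bx}$ between $L^1_\bx$ (bounded) and $L^p_\bx$ (of size $\eps^\gamma$), using $p'\gamma=\beta$; this interpolation step is absent from your argument, and without it the stated rate in item \eqref{estimee norme Lp pi eps rho} --- and hence in Theorem \ref{main:th} --- is not reached.
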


\begin{proof}
	The core of this proof consists in deriving item \eqref{estimee norme Lp f eps} in Proposition \ref{a:priori:f:eps}.
	To do so, we consider some positive $\eps$ and  for $t \in \R^+$ define $u$ as follows
	\[
	u(t)
	\,=\,
	\LLL{
		f^\eps(t)}_{p
	}^2
	\,+\,
	\eps^{-\alpha}\,
	\left\|
	\left(
	\pi^\eps
	\,-\,
	\rho\right)(t)
	\right\|_{L^{p}
	}^2\,,
	\]
	for some positive $\alpha$ which needs to be determined. Our strategy consists in estimating each one of the term composing $u(t)$ separately and then to propose a combination of these two estimates which allow us to close the estimate on $u$. In order to simplify notations, we omit the dependence with respect to $\left(t,\bx,\bv\right)$
	when the context is clear. Furthermore, we denote by $C_{d,p}$ a generic positive constant depending only on exponent $p$ and dimension $d$ in this proof.\\
	
	We start by estimating $\LLL{
		f^\eps(t)}_{p
	}$. To do so, we multiply the first line in \eqref{kinetic VPFP} by 
		$
		\ds
		\left|
		f^\eps/
		\cM
		\right|^{p-1}
		$ and integrate over $\K^d\times\R^d$, this yields
		\begin{align*}
		&\frac{1}{p}\,
		\frac{\dD}{\dD t}\,
		\LLL{
			f^\eps}_{p
		}^p
		\,=\,\\[0.8em]
		&-\frac{1}{\eps}\,
		\int_{\K^d\times\R^d}
		\left[
		\bv\cdot
		\nabla_{\bx}\,
		f^\eps
		\,+\,
		\nabla_{\bv}\cdot\left(
		\bE^\eps\,
		f^\eps
		\,-\,
		\frac{1}{\eps}\,
		\left(\,
		\bv\,
		f^\eps
		\,+\,
		\nabla_{\bv}\,
		f^\eps\,
		\right)\right)
		\right]\left|
		\frac{f^\eps}{\cM}\,\right|^{p-1}
		\dD\bx \,\dD \bv\,.
		\end{align*}
		To estimate the latter integral, we first point out that the contribution of the free transport operator cancels since we have
		\[
		\int_{\K^d\times\R^d}
		\bv\cdot
		\nabla_{\bx}\,
		f^\eps
		\left|
		\frac{f^\eps}{\cM}\,\right|^{p-1}
		\dD\bx \,\dD \bv\,=\,
		\frac{1}{p}
		\int_{\K^d\times\R^d}
		\nabla_{\bx}\cdot
		\left(
		\frac{\left|f^\eps\right|^{p}}{\cM^{p-1}}\,\bv\right)
		\dD\bx \,\dD \bv\,=\,
		0\,.
		\]
		According to the latter observation, we deduce that the time derivative of $\LLL{
			f^\eps}_{p
		}$ verifies
		\[
		\frac{1}{p}\,
		\frac{\dD}{\dD t}\,
		\LLL{
			f^\eps}_{p
		}^p
		\,+\,
		\frac{1}{\eps}\,
		\int_{\K^d\times\R^d}
		\nabla_{\bv}\cdot
		\left[
		\bE^\eps\,
		f^\eps
		\,-\,
		\frac{1}{\eps}\,
		\cM\,\nabla_{\bv}\left(\frac{f^\eps}{\cM}\right)
		\right]\left|
		\frac{f^\eps}{\cM}\,\right|^{p-1}
		\dD\bx \,\dD \bv\,=\,0\,,
		\]
		where we also used the relation $\bv\,
		f^\eps
		+
		\nabla_{\bv}\,
		f^\eps=\cM\,\nabla_{\bv}\left(f^\eps/\cM\right)$.
		An integration by part with respect to $\bv$ in the integral term of the latter relation yields
		\[
		\frac{1}{p}\,
		\frac{\dD}{\dD t}\,
		\LLL{
			f^\eps}_{p
		}^p\,+\,
		\frac{1}{\eps^2}\,
		\cD_{p}
		\left[\,
		f^\eps
		\,
		\right]
		\,=\,
		\frac{p-1}{\eps}\,
		\int_{\K^d\times\R^d}
		\bE^{\eps}\,\frac{f^\eps}{\cM}
		\,\nabla_{\bv}\left(
		\frac{f^\eps}{\cM}\,\right)\left|\,\frac{f^\eps}{\cM}\,\right|^{p-2}
		\cM
		\,\dD\bx \,\dD \bv\,.
		\]
	Taking the uniform norm of $\bE^\eps$ and applying Young's inequality in the right hand side of the latter relation, we deduce the following differential inequality
	\begin{equation}\label{estimee:f:0}
		\frac{1}{p}\,
		\frac{\dD}{\dD t}\,
		\LLL{
			f^\eps}_{p
		}^p\,+\,
		\frac{1\,-\,\eta}{\eps^2}\,
		\cD_{p}
		\left[\,
		f^\eps
		\,
		\right]
		\,\leq\,
		\frac{p-1}{4\,\eta}\,
		\left\|
		\bE^\eps
		\right\|_{L^{\infty}
		}^2\,
		\LLL{
			f^\eps}_{p
		}^p\,,
	\end{equation}
	for all positive $\eta$. Then we replace $\ds
	\left\|
	\bE^\eps
	\right\|_{L^{\infty}
	}^2
	$ as follows in the latter estimate
	\[
	\left\|
	\bE^\eps
	\right\|_{L^{\infty}
	}^2
	\,\leq\,
	3\,
	\left(\,
	\left\|\,
	\bE^\eps
	-\,
	\bI^\eps\,
	\right\|_{L^{\infty}
	}^2
	\,+\,
	\left\|\,
	\bI^\eps
	-\,
	\bE\,
	\right\|_{L^{\infty}
	}^2
	\,+\,
	\left\|\,
	\bE\,
	\right\|_{L^{\infty}
	}^2
	\,\right),
	\]
	and estimate the norm $\ds \left\|\,
	\bE^\eps
	\,-\,
	\bI^\eps\,
	\right\|_{L^{\infty}
	}$ applying Lemma \ref{I-E}, the quantities\newline $\ds \left\|\,
	\bI^\eps
	\,-\,
	\bE\,
	\right\|_{L^{\infty}
	}$, $\ds \left\|\,
	\bE\,
	\right\|_{L^{\infty}
	}$ applying Morrey's inequality \eqref{Morrey inequality}. It yields for all $\eta>0$
	\begin{equation}\label{estimee dt f Lp}
		\frac{1}{p}\,
		\frac{\dD}{\dD t}\,
		\LLL{
			f^\eps}_{p
		}^p\,+\,
		\frac{1-\eta}{\eps^2}\,
		\cD_{p}
		\left[\,
		f^\eps
		\,
		\right]
		\,\leq\,
		\frac{C_{d,p}}{\eta}\,
		\left(
		\,
		\eps^{2\,\gamma}\,
		\LLL{
			f^\eps}_{p
		}^2
		\,+\,
		\left\|\,
		\pi^\eps
		\,-\,
		\rho\,
		\right\|_{L^{p}
		}^2
		\,+\,
		\left\|\,
		\rho\,
		\right\|_{L^{p}
		}^2
		\,
		\right)
		\LLL{
			f^\eps}_{p
		}^p\,.
	\end{equation}

	We now estimate $\ds\left\|
	\left(
	\pi^\eps
	-\,
	\rho\right)(t)
	\right\|_{L^{p}}$. To do so, we multiply the difference between equation \eqref{eq pi eps} and the first line of \eqref{ddP equation} by 
		$
		\ds
		\left(\,
		\pi^\eps-\,\rho\,
		\right)
		\left|\,
		\pi^\eps-\,\rho\,
		\right|^{p-2}
		$
		and integrate with respect to $\by$ over $\K^d$, this yields
		\begin{align*}
		&\frac{1}{p}
		\frac{\dD}{\dD t}\,
		\left\|\,
		\pi^\eps
		-\,
		\rho\,
		\right\|_{L^{p}
		}^p\,=\,\\[0.8em]
		&-\int_{\K^d}
		\nabla_{\by}\cdot
		\left[\,
		\int_{\R^d}
		\tau^{-1}_{\eps\bv}
		\left(\bE^\eps f^\eps\right)
		\dD\bv
		-\bE\rho
		-
		\nabla_{\by}\,
		\left(
		\pi^\eps
		-\,
		\rho
		\right)\right]
		\left(
		\pi^\eps-\,\rho\,
		\right)
		\left|
		\pi^\eps-\,\rho
		\right|^{p-2}\dD \by\,,
		\end{align*}
		then we integrate by part with respect to $\by$ in the latter integral term and obtain
		\[
		\frac{1}{p}\,
		\frac{\dD}{\dD t}\,
		\left\|\,
		\pi^\eps
		-\,
		\rho\,
		\right\|_{L^{p}
		}^p\,+\,
		\Delta_{p}
		\left[\,
		\pi^\eps
		-\,
		\rho\,
		\right]
		\,=\,\cA\,,
		\]
		where $\cA$ is given by
		\[
		\cA\,=\,
		(p-1)
		\int_{\K^d}
		\nabla_{\by}\left(
		\pi^\eps
		-
		\rho\right)\cdot
		\left(
		\int_{\R^d}
		\tau_{\eps\,\bv}^{-1}\left(\bE^\eps f^\eps\right)\dD \bv -\,
		\bE\,\rho\right)
		\left|
		\pi^\eps
		-\,
		\rho\right|^{p-2}
		\dD\by\,.
		\]
		To estimate $\cA$, we use the following decomposition
		\[
		\cA\,=\,\cA_1\,+\,\cA_2\,+\,\cA_3\,,
		\]
	where $\cA_1$,
	$\cA_2$
	and
	$\cA_3$ are given by
	\begin{equation*}
		\left\{
		\begin{array}{l}
			\displaystyle
			\ds\cA_1
			\,=\,
			(p-1)
			\int_{\K^d}
			\nabla_{\by}\left(
			\pi^\eps
			-
			\rho\right)\cdot
			\int_{\R^d}
			\left(
			\tau_{\eps\,\bv}^{-1}\,\bE^\eps
			-\,
			\bE^\eps
			\right)g^\eps\,\dD \bv
			\left|
			\pi^\eps
			-\,
			\rho\right|^{p-2}
			\dD\by\,,\\[1.5em]
			\displaystyle
			\ds\cA_2
			\,=\,
			(p-1)
			\int_{\K^d}
			\nabla_{\by}\left(
			\pi^\eps
			-\,
			\rho\right)\cdot
			\left(
			\bE^\eps
			-\,
			\bI^\eps
			\right)\pi^\eps\,
			\left|
			\pi^\eps
			-\,
			\rho\right|^{p-2}
			\dD\by\,,\\[1.5em]
			\displaystyle
			\ds\cA_3
			\,=\,
			(p-1)
			\int_{\K^d}
			\nabla_{\by}\left(
			\pi^\eps
			-\,
			\rho\right)\cdot
			\left(
			\bI^\eps\,\pi^\eps
			-\,
			\bE\,\rho
			\right)
			\left|
			\pi^\eps
			-\,
			\rho\right|^{p-2}
			\dD\by\,.
		\end{array}
		\right.
	\end{equation*}
	$\cA_1$ and $\cA_2$ are error terms which we estimate using Sobolev inequalities whereas we estimate $\cA_3$ using the properties of the limiting macroscopic equation \eqref{ddP equation}.\\
	
	Let us start with $\cA_1$, which we estimate using Young inequality
	\[
	\cA_1
	\,\leq\,
	\eta\,
	\Delta_{p}
	\left[\,
	\pi^\eps
	-\,
	\rho\,
	\right]
	\,+\,
	\frac{p-1}{4\,\eta}\,\cA_{11}\,,
	\]
	for all positive $\eta$ and where $\cA_{11}$ is given by
	\[
	\cA_{11}\,=\,
	\int_{\K^d}
	\left|
	\int_{\R^d}
	\left(
	\tau_{\eps\,\bv}^{-1}\,\bE^\eps
	-\,
	\bE^\eps\right)g^\eps\,\dD \bv\right|^2
	\left|
	\pi^\eps
	-\,
	\rho\right|^{p-2}
	\dD\by\,.
	\]
	Thanks to Morrey inequality \eqref{Morrey inequality}, we have for all $\bv \in \R^d$
	\[
	\left\|
	\tau_{\eps\,\bv}^{-1}\,\bE^\eps
	-\,
	\bE^\eps
	\right\|_{L^{\infty}}
	\,\leq\,
	C_{d,p}\left|\eps\,\bv\right|^{\gamma}\,
	\left\|
	\rho^\eps
	\right\|_{L^{p}}\,.
	\]
	After applying Jensen's inequality to estimate $\left\|
	\rho^\eps
	\right\|_{L^{p}}$, this yields
	\[
	\left\|
	\tau_{\eps\,\bv}^{-1}\,\bE^\eps
	-\,
	\bE^\eps
	\right\|_{L^{\infty}}
	\,\leq\,
	C_{d,p}\left|\eps\,\bv\right|^{\gamma}\,
	\LLL{
		f^\eps}_{p
	}\,.
	\]
	We substitute $\tau_{\eps\,\bv}^{-1}\,\bE^\eps
	-\,
	\bE^\eps$ with the latter estimate in the definition of $\cA_{11}$ and deduce
	\[
	\cA_{11}\,\leq\,C_{d,p}\,\eps^{2\gamma}\,
	\LLL{
		f^\eps}_{p
	}^2\,
	\int_{\K^d}
	\left|
	\int_{\R^d}
	|\bv|^{\gamma}\,g^\eps\,\dD \bv\right|^2
	\left|
	\pi^\eps
	-\,
	\rho\right|^{p-2}
	\dD\by\,.
	\]
	Applying H\"older's inequality, we obtain
	\[
	\cA_{11}\,\leq\,C_{d,p}\,\eps^{2\gamma}\,
	\LLL{
		f^\eps}_{p
	}^2\,
	\left\|\,
	\pi^\eps
	-\,
	\rho\,
	\right\|_{L^{p}
	}^{p-2}\,
	\left(
	\int_{\K^d}
	\left|
	\int_{\R^d}
	|\bv|^{\gamma}\,g^\eps\,\dD \bv\right|^p\,
	\dD\by
	\right)^{2/p}\,.
	\]
	To estimate the integral in the latter inequality, we apply H\"older inequality
	\[
	\left(
	\int_{\K^d}
	\left|
	\int_{\R^d}
	|\bv|^{\gamma}\,g^\eps\,\dD \bv\right|^p\,
	\dD\by
	\right)^{2/p}
	\,\leq\,
	\left(
	\int_{\R^d}
	|\bv|^{p'\gamma}\,\cM\,\dD \bv
	\right)^{2/p'}
	\LLL{
		g^\eps}_{p
	}^2
	\,.
	\]
	Then we notice that $\LLL{
		g^\eps}_{p
	}\,=\,\LLL{
		f^\eps}_{p
	}$ and deduce
	\[
	\cA_{11}\,\leq\,C_{d,p}\,\eps^{2\gamma}\,
	\LLL{
		f^\eps}_{p
	}^4\,
	\left\|\,
	\pi^\eps
	-\,
	\rho\,
	\right\|_{L^{p}
	}^{p-2}\,.
	\]
	In the end, it yields the following bound for $\cA_1$
	\[
	\cA_1
	\,\leq\,
	\eta\,
	\Delta_{p}
	\left[\,
	\pi^\eps
	-\,
	\rho\,
	\right]
	\,+\,
	\frac{C_{d,p}}{\eta}
	\,\eps^{2\gamma}\,
	\LLL{
		f^\eps}_{p
	}^4\,
	\left\|\,
	\pi^\eps
	-\,
	\rho\,
	\right\|_{L^{p}
	}^{p-2}
	\,.
	\]
	
	To estimate $\cA_{2}$, we follow the same method as before excepted that we apply Lemma \ref{I-E} to bound $\bE^\eps-\bI^\eps$, it yields
	\[
	\cA_{2}
	\,\leq\,
	\eta\,
	\Delta_{p}
	\left[\,
	\pi^\eps
	-\,
	\rho\,
	\right]
	\,+\,
	\frac{C_{d,p}}{\eta}
	\,\eps^{2\gamma}\,
	\LLL{
		f^\eps}_{p
	}^4\,
	\left\|\,
	\pi^\eps
	-\,
	\rho\,
	\right\|_{L^{p}
	}^{p-2}
	\,.
	\]
	
	We turn to the last term $\cA_{3}$, which decomposes as follows
	\[
	\cA_{3}
	\,=\,
	\cA_{31}
	\,+\,
	\cA_{32}
	\,+\,
	\cA_{33}
	\,,
	\]
	where the latter terms are given by
	\begin{equation*}
		\left\{
		\begin{array}{l}
			\displaystyle
			\ds
			\cA_{31}
			\,=\,
			(p-1)
			\int_{\K^d}
			\nabla_{\by}\left(
			\pi^\eps
			-\,
			\rho\right)\cdot
			\left(
			\bI^\eps
			-\,
			\bE
			\right)
			\left(
			\pi^\eps
			-\,
			\rho
			\right)
			\left|
			\pi^\eps
			-\,
			\rho\right|^{p-2}
			\dD\by\,,\\[1.5em]
			\displaystyle
			\ds
			\cA_{32}
			\,=\,
			(p-1)
			\int_{\K^d}
			\nabla_{\by}\left(
			\pi^\eps
			-\,
			\rho\right)\cdot
			\left(
			\bI^\eps
			-\,
			\bE
			\right)
			\rho\,
			\left|
			\pi^\eps
			-\,
			\rho\right|^{p-2}
			\dD\by\,,\\[1.5em]
			\displaystyle
			\ds
			\cA_{33}
			\,=\,
			
			(p-1)
			\int_{\K^d}
			\nabla_{\by}\left(
			\pi^\eps
			-\,
			\rho\right)\cdot
			\bE\,
			\left(
			\pi^\eps
			-\,
			\rho
			\right)
			\left|
			\pi^\eps
			-\,
			\rho\right|^{p-2}
			\dD\by\,.
		\end{array}
		\right.
	\end{equation*}
	We start with $\cA_{31}$, which rewrites as follows after an integration by part
	\[
	\cA_{31}
	\,=\,
	-\,\frac{
		p-1}{p}\,
	\int_{\K^d}
	\left|
	\pi^\eps
	-\,
	\rho\right|^p\,\nabla_{\bx}\cdot \left(
	\bI^\eps
	\,-\,
	\bE
	\right)
	\dD\by\,.
	\]
	Therefore, replacing $\bI^\eps$ and $\bE$ according to equations \eqref{def I eps} and \eqref{ddP equation}, we deduce the following relation 
	\[
	\cA_{31}
	\,=\,
	-\,\frac{
		p-1}{p}\,
	\int_{\K^d}
	\left|
	\pi^\eps
	-\,
	\rho\right|^p\left(
	\pi^\eps
	-\,
	\rho
	\right)
	\dD\by\,.
	\]
	Since $\pi^\eps$ has positive values and taking the absolute value of $\rho$, we obtain
	\[
	\cA_{31}
	\,\leq\,
	\frac{
		p-1}{p}\,
	\|
	\rho
	\|_{L^{\infty}}
	\left\|\,
	\pi^\eps
	-\,
	\rho\,
	\right\|_{L^{p}
	}^{p}
	\,.
	\]
	\begin{remark}
		It is the only time that we use the $L^\infty$-norm of $\rho$ in our analysis.
	\end{remark}
	To estimate $\cA_{32}$ and  $\cA_{33}$, we use the same techniques as the ones already used to estimate $\cA_1$ and $\cA_2$. Therefore, we do not detail the computations. In the end it yields
	\[
	\cA_{32}\,+\,\cA_{33}
	\,\leq\,
	\eta\,
	\Delta_{p}
	\left[\,
	\pi^\eps
	-\,
	\rho\,
	\right]
	\,+\,
	\frac{C_{d,p}}{\eta}
	\,
	\|
	\rho
	\|_{L^{p}}^2
	\left\|\,
	\pi^\eps
	-\,
	\rho\,
	\right\|_{L^{p}
	}^{p}
	\,.
	\]
	Gathering latter computations and taking $\eta$ small enough, we obtain the following estimate
	\begin{align*}
		\frac{1}{p}\,
		\frac{\dD}{\dD t}\,
		\left\|\,
		\pi^\eps
		-
		\rho\,
		\right\|_{L^{p}
		}^p
		\leq C_{d,p}
		\left(
		\,\eps^{2\gamma}\,
		\LLL{
			f^\eps}_{p
		}^4
		\left\|\,
		\pi^\eps
		-
		\rho\,
		\right\|_{L^{p}
		}^{p-2}
		+
		\left(
		\|
		\rho
		\|_{L^{p}}^2
		+
		\|
		\rho
		\|_{L^{\infty}}\right)
		\left\|\,
		\pi^\eps
		-
		\rho\,
		\right\|_{L^{p}
		}^{p}\right)
		,
	\end{align*}
	for some constant $C_{d,p}$ only depending on $d$ and $p$. Dividing by $\ds \left\|\,
	\pi^\eps
	\,-\,
	\rho\,
	\right\|_{L^{p}
	}^{p-2}$ the latter estimate, this yields
	\begin{equation}\label{estimee dt pi rho Lp}
		\frac{1}{2}\,
		\frac{\dD}{\dD t}\,
		\left\|\,
		\pi^\eps
		-\,
		\rho\,
		\right\|_{L^{p}
		}^2
		\,\leq\,
		C_{d,p}
		\left(
		\,\eps^{2\gamma}\,
		\LLL{
			f^\eps}_{p
		}^4
		\,+\,
		\left(
		\|
		\rho
		\|_{L^{p}}^2
		\,+\,
		\|
		\rho
		\|_{L^{\infty}}\right)
		\left\|\,
		\pi^\eps
		-\,
		\rho\,
		\right\|_{L^{p}
		}^{2}\right)
		\,.
	\end{equation}
	\\
	
	It is now possible to obtain item \eqref{estimee norme Lp f eps} in Proposition \ref{a:priori:f:eps}: we set $\eta$ to $1$ in \eqref{estimee dt f Lp} and take the sum between estimate \eqref{estimee dt f Lp} divided by $\ds \left\|
	f^\eps
	\right\|_{p
	}^{p-2}$ and estimate \eqref{estimee dt pi rho Lp}, we deduce that $u$ verifies the following differential inequality
	\begin{align*}
	&\frac{1}{2}\,
	\frac{\dD}{\dD t}\,
	u(t)
	\,\leq\,\\[0.8em]
	&C_{d,p}\,
	\left(
	\left(
	\eps^{2\gamma-\alpha}
	+
	\eps^{2\gamma}\right)
	\LLL{
		f^\eps}_{p
	}^4
	\,+\,
	\left\|
	\pi^\eps
	-\,
	\rho
	\right\|_{L^{p}
	}^2
	\LLL{
		f^\eps}_{p
	}^2
	\,+\,
	C_{\rho}
	\left(
	\eps^{-\alpha}
	\left\|
	\pi^\eps
	-\,
	\rho
	\right\|_{L^{p}
	}^{2}
	+
	\LLL{
		f^\eps}_{p
	}^2\right)
	\right)
	,
	\end{align*}
	where $C_{\rho}$ is given by
	\[
	C_{\rho}\,=\,
	\|
	\rho
	\|_{L^{p}}^2
	\,+\,
	\|
	\rho
	\|_{L^{\infty}}\,.
	\]
	Hence, taking $\alpha\,=\,\gamma$ and applying Lemma \ref{estimee rho} to estimate $C_{\rho}$, we deduce 
	\[
	\frac{\dD}{\dD t}\,
	u(t)
	\,\leq\,
	C_{d,p}\,
	\left(
	\eps^{\gamma}\,
	u(t)^2
	\,+\,
	C_{\rho_0,\rho_i}\,u(t)
	\right)
	\,,
	\]
	where $C_{\rho_0,\rho_i}$ is given by
	\[
	C_{\rho_0,\rho_i}\,=\,
	\max{
		\left(
		\|\,\rho_0\,\|_{L^{p}}^2
		,
		\|\,\rho_i\,\|_{L^{p+1}}^{2-2/p^2}
		,
		\|\,\rho_0\,\|_{L^{\infty}}
		\,,\,
		\|\,\rho_i\,\|_{L^{\infty}}
		\right)
	}
	\,.
	\]
	We divide the latter estimate by 
	$
	\ds
	\left(
	\eps^{\gamma}\,
	u(t)^2
	\,+\,
	C_{\rho_0,\rho_i}\,u(t)
	\right)
	$ and notice that
	\[
	\frac{1}{\eps^{\gamma}\,
		u(t)^2
		+
		C_{\rho_0,\rho_i}\,u(t)}
	\,=\,
	\frac{1}{C_{\rho_0,\rho_i}}
	\left(
	\frac{1}{u(t)}
	\,-\,
	\frac{\eps^{\gamma}}{\eps^{\gamma}\,
		u(t)
		+
		C_{\rho_0,\rho_i}}
	\right),
	\]
	therefore, we obtain
	\[
	\frac{\dD}{\dD t}\,
	\ln{
		\left(
		\frac{u(t)}{
			\eps^{\gamma}\,
			u(t)
			\,+\,
			C_{\rho_0,\rho_i}
		}\right)
	}
	\,\leq\,
	C_{\rho_0,\rho_i}\,
	C_{d,p}
	\,.
	\]
	Integrating between $0$ and $t$ and taking the exponential of the latter estimate, it yields
	\begin{equation}\label{estimate:u:final}
		u(t)\,\leq\,
		u(0)
		\left(
		1\,-\,
		\eps^{\gamma}\,
		\frac{u(0)}{C_{\rho_0,\rho_i}}
		\left(
		e^{C_{\rho_0,\rho_i}
			\,
			C_{d,p}\,
			t}
		\,-\,
		1
		\right)
		\right)^{-1}
		e^{C_{\rho_0,\rho_i}
			C_{d,p}\,
			t}
		\,,
	\end{equation}
	for all time $t$ verifying
	\[
	t\,<\,
	\frac{1}{C_{\rho_0,\rho_i}
		\,
		C_{d,p}}\,
	\ln{
		\left(
		1\,+\,
		\frac{C_{\rho_0,\rho_i}}{
			\,u(0)
		}\,
		\eps^{-\gamma}
		\right)
	}\,.
	\]
	To conclude this step, we estimate $u(0)$ by applying the triangular inequality 
	\begin{equation}\label{estime:u0}
		u(0)\,\leq\,
		\LLL{f^\eps_0}_p^2
		\,+\,2\,\eps^{-\gamma}\left(\left\|
		\pi^\eps_0
		\,-\,
		\rho_0^\eps
		\right\|_{L^{p}}^2\,+\left\|
		\rho^\eps_0
		\,-\,
		\rho_0
		\right\|_{L^{p}}^2\right).
	\end{equation}
	Thanks to assumption \eqref{hyp f eps 0}, we obtain
	\[
	u(0)\,\leq\,
	\LLL{f^\eps_0}_p^2\,+\,
	C_{d,p}\,m_p^2\,
	\eps^{2\beta-\gamma}\,,
	\]
	which yields
	\[
	u(0)\,\leq\,
	\LLL{f^\eps_0}_p^2\,+\,
	C_{d,p}\,m_p^2\,
	\eps^{\gamma\left(1\,+\,\frac{2}{p-1}\right)}\,,
	\]
	thanks to the relation $2\beta -\gamma = \gamma(1+2/(p-1))$. Replacing $u(0)$ in \eqref{estimate:u:final} thanks to the latter inequality, we deduce item \eqref{estimee norme Lp f eps} of Proposition \ref{a:priori:f:eps}, that is
	\[
	\LLL{f^\eps}_p\,\leq\,2
	\left(\LLL{f^\eps_0}_p\,+\,
	\eps^{\gamma\left(\frac{1}{2}\,+\,\frac{1}{p-1}\right)}\,
	C_{d,p}\,m_p\right)
	e^{C_{\rho_0,\rho_i}
		C_{d,p}\,
		t}
	\,,
	\]
	for all time $t$ less than $T^\eps$, where $T^\eps$ is given by
	\[
	T^\eps\,=\,
	\frac{1}{C_{\rho_0,\rho_i}
		\,
		C_{d,p}}\,
	\ln{
		\left(
		1\,+\,
		\frac{C_{\rho_0,\rho_i}}{
			4\left(
			\LLL{f^\eps_0}_p^2+
			\eps^{\gamma\left(1\,+\,\frac{2}{p-1}\right)}
			C_{d,p}m_p^2\right)
		}\,
		\eps^{-\gamma}
		\right)
	}\,.
	\]
	
	In order to prove item \eqref{estimee norme Lp pi eps rho}, we consider relation \eqref{estimee dt pi rho Lp}, replace $\ds \LLL{
		f^\eps(t)}_{p
	}$ with the estimate given by item \eqref{estimee norme Lp f eps} in Proposition \ref{a:priori:f:eps} and apply Lemma \ref{estimee rho} to bound $\rho$, this yields
	\begin{equation*}
		\frac{\dD}{\dD t}\,
		\left\|\,
		\pi^\eps
		\,-\,
		\rho\,
		\right\|_{L^{p}
		}^2
		\,\leq\,
		C_{d,p}
		\left(
		\,\eps^{2\gamma}\,
		m_p^4\,e^{C_{\rho_0,\rho_i}
			C_{d,p}\,
			t}
		\,+\,
		C_{\rho_0,\rho_i}
		\,
		\left\|\,
		\pi^\eps
		-\,
		\rho\,
		\right\|_{L^{p}
		}^{2}\right)
		\,,
	\end{equation*}
	for all time $t$ less than $T^\eps$, where $m_p$ and $C_{\rho_0,\rho_i}$ are given in Theorem \ref{main:th}. Multiplying the latter estimate by $\ds e^{-C_{\rho_0,\rho_i}
		C_{d,p}\,
		t}$, integrating between $0$ and $t$ and taking the square root on both sides of the inequality, we obtain
	\begin{equation*}
		\left\|\,
		\pi^\eps
		\,-\,
		\rho\,
		\right\|_{L^{p}
		}
		\,\leq\,
		e^{C_{\rho_0,\rho_i}
			\,
			C_{d,p}\,
			t}\,
		\left(
		\left\|\,
		\pi^\eps_0
		-\,
		\rho_0\,
		\right\|_{L^{p}
		}
		\,+\,
		\eps^{\gamma}\,
		C_{d,p}
		\,m_p^2\,
		t\right)
		\,.
	\end{equation*}
	According to assumption \eqref{hyp f eps 0}, we have
	$\left\|\,
	\pi^\eps_0
	-\,
	\rho_0\,
	\right\|_{L^{p}
	}\,\leq\,m_p\,\eps^{\beta}$. Since $1\,\leq\,m_p $, we deduce 
	\begin{equation*}
		\left\|\,
		\pi^\eps
		-\,
		\rho\,
		\right\|_{L^{p}
		}
		\,\leq\,
		\eps^{\gamma}\,
		C_{d,p}\,
		m_p^2\,
		e^{C_{\rho_0,\rho_i}
			\,
			C_{d,p}\,
			t}
		\,.
	\end{equation*}
	Item \eqref{estimee norme Lp pi eps rho} in Proposition \ref{a:priori:f:eps} is obtained applying the following interpolation inequality in the latter estimate
	\[
	\left\|\,
	\pi^\eps
	-\,
	\rho\,
	\right\|_{L^{2}
	}
	\,\leq\,
	\left\|\,
	\pi^\eps
	-\,
	\rho\,
	\right\|_{L^{1}
	}^{\frac{p-2}{p-1}}
	\left\|\,
	\pi^\eps
	-\,
	\rho\,
	\right\|_{L^{p}
	}^{p'}\,,
	\]
	where $p'$ is given by $p'=p/(p-1)$ and noticing that $p'\,\gamma\,=\,\beta$.
	\\
	
	To prove item \eqref{estimee f rho eps M L2} in Proposition \ref{a:priori:f:eps}, we set $p=2$ in \eqref{estimee:f:0}, apply Morrey's inequality \eqref{Morrey inequality} to estimate $\bE^\eps$ and Jensen's inequality, which ensures $
	\LLL{
		f^\eps}_{2
	}\,\leq\,
	\LLL{
		f^\eps}_{p
	}$. This yields
	\begin{equation*}
		\frac{1}{2}\,
		\frac{\dD}{\dD t}\,
		\LLL{
			f^\eps}_{2
		}^2\,+\,
		\frac{1-\eta}{\eps^2}\,
		\cD_{2}
		\left[\,
		f^\eps
		\,
		\right]
		\,\leq\,
		\frac{C_{d,p}}{\eta}\,
		\LLL{
			f^\eps}_{p
		}^{4}\,.
	\end{equation*}
	Therefore, taking $\eta=1/2$, integrating the latter estimate between $0$ and $t$ and replacing $\LLL{
		f^\eps}_{p
	}$ by the estimate given in item \eqref{estimee norme Lp f eps} in Proposition \ref{a:priori:f:eps} we obtain
	\begin{equation*}
		\int_{0}^t
		\cD_{2}
		\left[\,
		f^\eps
		\,
		\right]\,\dD s
		\,\leq\,
		\eps^2\,
		\left(
		\LLL{
			f^\eps_0}_{p
		}^2
		\,+\,
		C_{d,p}\,
		\int_{0}^t
		m_p^4
		\,
		e^{C_{\rho_0,\rho_i}
			C_{d,p}\,
			s}\,\dD s
		\right)
		\,,
	\end{equation*}
	for all time $t$ between $0$ and $T^\eps$.
	Hence, applying the Gaussian-Poincar\'e inequality which ensures $\ds 
	\LLL{
		f^\eps
		\,-\,
		\rho^\eps\,\cM
	}_{2
	}^2
	\,\leq\,
	\cD_{2}
	\left[\,
	f^\eps
	\,
	\right]
	$,
	we deduce
	\begin{equation*}
		\int_{0}^t
		\LLL{
			f^\eps
			\,-\,
			\rho^\eps\,\cM
		}_{2
		}^2\,\dD s
		\,\leq\,
		C_{d,p}\,\eps^2\,
		m_p^4
		\,
		e^{C_{\rho_0,\rho_i}
			C_{d,p}\,
			t}
		\,,
	\end{equation*}
	which yields the result by taking the square root of the latter estimate.
\end{proof}
Building on Proposition \ref{a:priori:f:eps}, we are able to prove equicontinuity estimates for $f^\eps$ in $L^p\left(\cM\right)$.
\begin{proposition}\label{equicontinuity f eps}
	Consider some exponent $p$ such that $p>d$ and set $\ds \beta \,=\,(p-d)\,/\,(p-1)$. Under assumptions \eqref{hyp rho 0 rho i} on $\ds(\rho_0,\rho_i)$ and  \eqref{hyp f eps 0} on the sequence of initial conditions $(f^\eps_0)_{\eps\,>\,0}$, consider a sequence of solutions $(f^\eps)_{\eps\,>\,0}$ to \eqref{kinetic VPFP}. There exists a constant $C$ only depending on exponent $p$ and dimension $d$ such that for all positive $\eps$ less than $1$, it holds
	\[
	\sup_{|\bx_0|\leq 1}|\bx_0|^{-\beta}
	\LLL{
		f^\eps
		-
		\tau_{\bx_0}\,
		f^\eps
	}_{2
	}(t)
	\,\leq\,
	e^{C_{\rho_0,\rho_i}^{-1}C\, m_p^2\,e^{C_{\rho_0,\rho_i}C\,t}}
	\sup_{|\bx_0|\leq 1}|\bx_0|^{-\beta}
	\LLL{
		f^\eps_0
		-
		\tau_{\bx_0}\,
		f^\eps_0
	}_{p
	}\,.
	\]
	for all time $t$ less than $T^\eps$, where $T^\eps$, $m_p$ and $C_{\rho_0,\rho_i}$ are given in Theorem \ref{main:th}.
\end{proposition}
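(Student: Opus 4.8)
The plan is to fix, for each $\bx_0\in\K^d$ with $|\bx_0|\le1$, the space increment $h^\eps_{\bx_0}:=f^\eps-\tau_{\bx_0}f^\eps$ and to run on it the same weighted $L^p\left(\cM\right)$ energy estimate that produced item \eqref{estimee norme Lp f eps} of Proposition \ref{a:priori:f:eps}, then to propagate the weight $|\bx_0|^{-\beta}$ and finally to pass from $L^p\left(\cM\right)$ to $L^2\left(\cM\right)$ by Jensen's inequality. The transport operator $\bv\cdot\nabla_{\bx}$ and the Fokker--Planck operator commute with $\tau_{\bx_0}$, and applying $\tau_{\bx_0}$ to \eqref{kinetic VPFP} shows that $\tau_{\bx_0}f^\eps$ solves the same equation with $\tau_{\bx_0}\bE^\eps$ in place of $\bE^\eps$. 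Subtracting, $h^\eps_{\bx_0}$ satisfies
\[
\partial_t h^\eps_{\bx_0}
+\tfrac{1}{\eps}\,\bv\cdot\nabla_{\bx}h^\eps_{\bx_0}
+\tfrac{1}{\eps}\,\bE^\eps\cdot\nabla_{\bv}h^\eps_{\bx_0}
+\tfrac{1}{\eps}\,\bigl(\bE^\eps-\tau_{\bx_0}\bE^\eps\bigr)\cdot\nabla_{\bv}\bigl(\tau_{\bx_0}f^\eps\bigr)
=\tfrac{1}{\eps^2}\,\nabla_{\bv}\cdot\bigl[\bv\,h^\eps_{\bx_0}+\nabla_{\bv}h^\eps_{\bx_0}\bigr],
\]
the only new feature relative to \eqref{kinetic VPFP} being the inhomogeneous term carrying the translation increment of the self-consistent field.

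Next I would multiply this equation by $\bigl|h^\eps_{\bx_0}/\cM\bigr|^{p-2}\bigl(h^\eps_{\bx_0}/\cM\bigr)$ and integrate over $\K^d\times\R^d$. Exactly as in the proof of Proposition \ref{a:priori:f:eps}, the transport term vanishes, the Fokker--Planck operator yields the dissipation $-\eps^{-2}\cD_p\bigl[h^\eps_{\bx_0}\bigr]$, and the term $\eps^{-1}\bE^\eps\cdot\nabla_{\bv}h^\eps_{\bx_0}$ is handled by Young's inequality as in \eqref{estimee:f:0}, a fraction of the gradient being absorbed by $\eps^{-2}\cD_p$ and leaving a contribution $\lesssim\left\|\bE^\eps\right\|_{L^\infty_{\bx}}^2\,\LLL{h^\eps_{\bx_0}}_p^p$. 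For the inhomogeneous term I would integrate by parts in $\bv$ to transfer the velocity derivative onto $\bigl|h^\eps_{\bx_0}/\cM\bigr|^{p-2}\bigl(h^\eps_{\bx_0}/\cM\bigr)$ and onto $\cM$, apply Young's inequality again so that the gradient part is once more swallowed by $\eps^{-2}\cD_p$, and use H\"older's inequality together with the translation invariance $\LLL{\tau_{\bx_0}f^\eps}_p=\LLL{f^\eps}_p$ (the weight $\cM$ depending only on $\bv$); this leaves a contribution $\lesssim\left\|\bE^\eps-\tau_{\bx_0}\bE^\eps\right\|_{L^\infty_{\bx}}^2\,\LLL{f^\eps}_p^2\,\LLL{h^\eps_{\bx_0}}_p^{p-2}$. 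Dividing by $\LLL{h^\eps_{\bx_0}}_p^{p-2}$ gives
\[
\frac{\dD}{\dD t}\LLL{h^\eps_{\bx_0}}_p^2
\;\lesssim\;
\left\|\bE^\eps\right\|_{L^\infty_{\bx}}^2\,\LLL{h^\eps_{\bx_0}}_p^2
\,+\,
\left\|\bE^\eps-\tau_{\bx_0}\bE^\eps\right\|_{L^\infty_{\bx}}^2\,\LLL{f^\eps}_p^2 .
\]

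The crucial step, which I expect to be the main obstacle, is to close this inequality \emph{homogeneously} in $\LLL{h^\eps_{\bx_0}}_p$ so that the weight $|\bx_0|^{-\beta}$ propagates unchanged. Since $\rho^\eps-\tau_{\bx_0}\rho^\eps=\int_{\R^d}h^\eps_{\bx_0}\,\dD\bv$, Morrey's inequality \eqref{Morrey inequality} and Jensen's inequality give $\left\|\bE^\eps-\tau_{\bx_0}\bE^\eps\right\|_{L^\infty_{\bx}}\lesssim\left\|\rho^\eps-\tau_{\bx_0}\rho^\eps\right\|_{L^p_{\bx}}+\left\|\rho_i-\tau_{\bx_0}\rho_i\right\|_{L^p_{\bx}}\lesssim\LLL{h^\eps_{\bx_0}}_p+\left\|\rho_i-\tau_{\bx_0}\rho_i\right\|_{L^p_{\bx}}$: it is essential to bound the electron increment by $\LLL{h^\eps_{\bx_0}}_p$ and \emph{not} by the H\"older-regularity bound $\lesssim|\bx_0|^{1-d/p}\LLL{f^\eps}_p$ coming from \eqref{Morrey inequality}, which would ruin the $|\bx_0|^{-\beta}$ scaling since $1-d/p<\beta$; the fixed ion density contributes only a benign source term controlled by the regularity of $\rho_i$. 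Likewise $\left\|\bE^\eps\right\|_{L^\infty_{\bx}}\lesssim\left\|\rho^\eps\right\|_{L^p_{\bx}}+\left\|\rho_i\right\|_{L^p_{\bx}}\lesssim\LLL{f^\eps}_p+\left\|\rho_i\right\|_{L^p_{\bx}}$ by Morrey's and Jensen's inequalities. Inserting these bounds, multiplying the differential inequality by $|\bx_0|^{-2\beta}$, integrating in time and taking the supremum over $|\bx_0|\le1$, one obtains for $Q_p(t):=\sup_{|\bx_0|\le1}|\bx_0|^{-\beta}\LLL{h^\eps_{\bx_0}}_p(t)$ the Gr\"onwall inequality
\[
Q_p(t)^2\;\le\;Q_p(0)^2\,\exp\!\left(C_{d,p}\int_0^t\bigl(\left\|\bE^\eps\right\|_{L^\infty_{\bx}}^2+\LLL{f^\eps}_p^2\bigr)\,\dD s\right).
\]

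It then remains to control the accumulated exponent on $[0,T^\eps]$ and to conclude. Plugging the bound $\LLL{f^\eps}_p\lesssim m_p\,e^{C_{\rho_0,\rho_i}C_{d,p}t}$ from item \eqref{estimee norme Lp f eps} of Proposition \ref{a:priori:f:eps} and controlling $\left\|\rho_i\right\|_{L^p_{\bx}}$ through \eqref{hyp rho 0 rho i} yields $\left\|\bE^\eps\right\|_{L^\infty_{\bx}}^2+\LLL{f^\eps}_p^2\lesssim m_p^2\,e^{2C_{\rho_0,\rho_i}C_{d,p}t}+C_{\rho_0,\rho_i}$, whose time integral over $[0,T^\eps]$ is of the announced form $C_{\rho_0,\rho_i}^{-1}\,C\,m_p^2\,e^{C_{\rho_0,\rho_i}C t}$ after enlarging $C$. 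Finally $\LLL{h^\eps_{\bx_0}}_2\le\LLL{h^\eps_{\bx_0}}_p$ by Jensen's inequality, so $\sup_{|\bx_0|\le1}|\bx_0|^{-\beta}\LLL{h^\eps_{\bx_0}}_2(t)\le Q_p(t)$, while $Q_p(0)=\sup_{|\bx_0|\le1}|\bx_0|^{-\beta}\LLL{f^\eps_0-\tau_{\bx_0}f^\eps_0}_p$ is precisely the quantity on the right-hand side of the statement; taking square roots concludes. The one genuinely delicate point is the inhomogeneous $\eps^{-1}$ force term in the equation for $h^\eps_{\bx_0}$: its velocity derivative lands on $\tau_{\bx_0}f^\eps$ rather than on $h^\eps_{\bx_0}$ and has no small factor in front, but integrating by parts in $\bv$ and absorbing the resulting gradient into the Fokker--Planck dissipation $\eps^{-2}\cD_p$ neutralizes both difficulties, exactly as the nonlinearity was tamed in Proposition \ref{a:priori:f:eps}.
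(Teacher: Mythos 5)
Your proposal follows essentially the same route as the paper: derive the equation for the increment $h^\eps=f^\eps-\tau_{\bx_0}f^\eps$ (your splitting of the force terms, $\bE^\eps\cdot\nabla_{\bv}h^\eps+(\bE^\eps-\tau_{\bx_0}\bE^\eps)\cdot\nabla_{\bv}\tau_{\bx_0}f^\eps$ versus the paper's $\tau_{\bx_0}\bE^\eps\cdot\nabla_{\bv}h^\eps+(\bE^\eps-\tau_{\bx_0}\bE^\eps)\cdot\nabla_{\bv}f^\eps$, is an immaterial variant), run the weighted $L^p(\cM)$ energy estimate with Young's inequality absorbing the gradients into $\eps^{-2}\cD_p[h^\eps]$, close homogeneously via the Morrey bound $\|\bE^\eps-\tau_{\bx_0}\bE^\eps\|_{L^\infty_{\bx}}\lesssim\LLL{h^\eps}_p$, apply Gr\"onwall with the bound on $\LLL{f^\eps}_p$ from Proposition \ref{a:priori:f:eps}, and finish with Jensen's inequality $\LLL{\cdot}_2\le\LLL{\cdot}_p$. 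This matches the paper's argument step for step (you are in fact slightly more explicit than the paper about the ion-increment contribution to $\bE^\eps-\tau_{\bx_0}\bE^\eps$, which the paper's proof passes over silently).
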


\begin{proof}
	We consider $\eps>0$ and $(t,\bx_0) \in \R^+\times\K^{d}$ such that $\ds|\bx_0|\,\leq\,1$. Furthermore we denote by $C_{d,p}$ a generic positive constant depending only on exponent $p$ and dimension $d$ in this proof.\\
	We first compute the equation solved by $\ds h^\eps:=f^\eps-\tau_{\bx_0}\,f^\eps$. It is given by the difference between equation \eqref{kinetic VPFP}  and  equation \eqref{kinetic VPFP} translated by $\bx_0$ with respect to the spatial variable, that is
	\[
	\partial_t \, h^\eps
	\,+\,
	\frac{1}{\eps}\,
	\bv\cdot
	\nabla_{\bx}\,
	h^\eps
	\,+\,
	\frac{1}{\eps}
	\left(
	\bE^\eps
	-\tau_{\bx_0}\bE^\eps\right)\cdot
	\nabla_{\bv}\,
	f^\eps
	\,+\,
	\frac{1}{\eps}\,
	\tau_{\bx_0}\bE^\eps\cdot
	\nabla_{\bv}\,
	h^\eps
	\,=\,
	\frac{1}{\eps^2}\,
	\nabla_{\bv}\cdot
	\left[\,
	\bv\,
	h^\eps
	+
	\nabla_{\bv}\,
	h^\eps\,
	\right].
	\]
To estimate the variations of $\LLL{h^\eps}_p$, we proceed as in the estimation of $\LLL{f^\eps}_p$ in the proof of Proposition \ref{a:priori:f:eps}. First, we multiply the latter equation by $\ds\left(h^\eps/\cM\right)\left|h^\eps/\cM\right|^{p-2}$ and integrate over $\K^{d}\times\R^{d}$. Then, we notice that the free transport operator has a zero contribution. Therefore, using the relation $\bv\,
		h^\eps
		+
		\nabla_{\bv}\,
		h^\eps=\cM\,\nabla_{\bv}\left(h^\eps/\cM\right)$ and integrating by part with respect to $\bv$, we obtain
	\begin{align*}
	\frac{1}{p}\,
	\frac{\dD}{\dD t}\,
	\LLL{
		h^\eps}_{p
	}^p&\,+\,
	\frac{1}{\eps^2}\,
	\cD_{p}
	\left[\,
	h^\eps
	\,
	\right]\,
	=\\[0.8em]
	&\frac{p-1}{\eps}
	\int_{\K^d\times\R^d}\left|\frac{h^\eps}{\cM}\right|^{p-2}\nabla_{\bv}\left(
	\frac{h^\eps}{\cM}\,\right)\cdot\left(
	f^\eps
	\left(
	\bE^\eps
	-\tau_{\bx_0}\bE^\eps\right)
	\,+\,
	h^\eps\tau_{\bx_0}\bE^\eps
	\right)
	\dD\bx \,\dD \bv\,.
	\end{align*}
	We apply Young's inequality to estimate the right hand side in the latter inequality and deduce
	\begin{align*}
	\frac{1}{p}\,
	\frac{\dD}{\dD t}\,&
	\LLL{
		h^\eps}_{p
	}^p
	\,\leq\,\\[0.8em]
	&\frac{p-1}{2}
	\int_{\K^d\times\R^d}\left|\frac{h^\eps}{\cM}\right|^{p-2}\left(
	\left|
	\frac{f^\eps}{\cM}\,\right|^2
	\left|
	\bE^\eps
	-\tau_{\bx_0}\bE^\eps\right|^2
	\,+\,
	\left|
	\frac{h^\eps}{\cM}\,\right|^2\left|\tau_{\bx_0}\bE^\eps\right|^2
	\right)\cM\,
	\,\dD\bx \,\dD \bv\,.
	\end{align*}
	After taking the uniform norms of $\ds
	\bE^\eps
	-\tau_{\bx_0}\bE^\eps$ and $\ds
	\bE^\eps$ and applying H\"older's inequality to estimate the cross product between $f^\eps$ and $h^\eps$, it yields 
	\[
	\frac{1}{p}\,
	\frac{\dD}{\dD t}\,
	\LLL{
		h^\eps}_{p
	}^p
	\,\leq\,
	\frac{p-1}{2}
	\left(
	\left\|
	\bE^\eps
	-\tau_{\bx_0}\bE^\eps
	\right\|_{L^{\infty}}^2
	\LLL{h^\eps}_p^{p-2}\LLL{f^\eps}_p^{2}
	\,+\,
	\left\|
	\bE^\eps
	\right\|_{L^{\infty}}^2
	\LLL{h^\eps}_p^p
	\right).
	\]
	We apply Morrey's inequality \eqref{Morrey inequality} to estimate the uniform norms of $\ds
	\bE^\eps
	-\tau_{\bx_0}\bE^\eps$ and $\ds
	\bE^\eps$, it yields \[
	\frac{1}{p}\,
	\frac{\dD}{\dD t}\,
	\LLL{
		h^\eps}_{p
	}^p
	\,\leq\,
	C_{d,p}
	\LLL{h^\eps}_p^{p}\LLL{f^\eps}_p^{2}\,.
	\]
	To conclude, we divide the latter estimate by $\ds \LLL{
		h^\eps}_{p
	}^{p-1}$, multiply it by $\ds e^{-C_{d,p}\int_{0}^{t}\LLL{f^\eps}_p^{2}\,\dD s}$, integrate between $0$ and $t$ and replace $\LLL{f^\eps}_p$ with the estimate in item \eqref{estimee norme Lp f eps} of Proposition \ref{a:priori:f:eps}, it yields
	\[
	\LLL{
		h^\eps}_{p
	}
	\,\leq\,
	\exp{\left(C_{\rho_0,\rho_i}^{-1}
		\left(
		\LLL{
			f^\eps_0}_{p
		}+C_{d,p}m_p\eps^{\frac{\gamma}{2}}
		\right)^2e^{C_{d,p}C_{\rho_0,\rho_i}t}\right)}
	\LLL{
		h^\eps_0}_{p
	}\,,
	\]
	for all time $t$ less than $T^\eps$, where $T^\eps$ is given in Theorem \ref{main:th}.
	We obtain the result dividing the latter estimate by $\ds|\bx_0|^{\beta}$, taking the supremum over all $\bx_0$ with norm less than $1$ and since according to Jensen's inequality it holds $\ds\LLL{
		h^\eps}_{2
	}\,\leq\,\LLL{
		h^\eps}_{p
	}$.
\end{proof}
\section{Proof of Theorem \ref{main:th}}
\label{sec:proof}
We consider the following decomposition of the quantity that we need to estimate
\[
\left\|
f^\eps
\,-\,
\rho\,\cM
\right\|_{L^2
	\left(
	[0,t]\,,\,
	L^{2}
	\left(
	\cM
	\right)
	\right)
}
\,\leq\,
\left(\cE_1\,+\,\cE_2\,+\,\cE_3\right)(t)\,,
\]
where $\cE_1$, $\cE_2$ and $\cE_3$ are given by
\begin{equation*}
	\left\{
	\begin{array}{l}
		\displaystyle
		\ds
		\cE_1(t)
		\,=\,
		\left\|
		f^\eps
		\,-\,
		\rho^\eps\,\cM
		\right\|_{L^2
			\left(
			[0,t]\,,\,
			L^{2}
			\left(
			\cM
			\right)
			\right)
		}\,,\\[1.5em]
		\displaystyle
		\ds
		\cE_2(t)
		\,=\,
		\left\|\rho^\eps
		\,-\,
		\pi^\eps
		\right\|_{L^2
			\left(
			[0,t]\,,\,
			L^{2}
			\left(
			\K^d
			\right)
			\right)
		}\,,\\[1.5em]
		\displaystyle
		\ds
		\cE_3(t)
		\,=\,
		\left\|\pi^\eps
		\,-\,
		\rho
		\right\|_{L^2
			\left(
			[0,t]\,,\,
			L^{2}
			\left(
			\K^d
			\right)
			\right)
		}\,.
	\end{array}
	\right.
\end{equation*}
We estimate $\cE_1$ and $\cE_3$ thanks to Proposition \ref{a:priori:f:eps}. Indeed, according to item \eqref{estimee f rho eps M L2}, it holds
\begin{equation*}
	\cE_1(t)
	\,\leq\,
	C\,
	\eps\,
	m_p^2
	\,
	e^{C_{\rho_0,\rho_i}
		C\,
		t}
	\,,
\end{equation*}
and according to item \eqref{estimee norme Lp pi eps rho}, it holds 
\begin{equation*}
	\cE_3(t)
	\,\leq\,
	C\,\eps^{\beta}\,
	m_p^{2\,p'}\,
	e^{C_{\rho_0,\rho_i}
		\,
		C\,
		t}
	\,,
\end{equation*}
for all time $t$ less than $T^\eps$, where $T^\eps$ is given in  Theorem \ref{main:th}. We turn to the last term $\cE_2$, which we estimate thanks to the following decomposition
\[
\cE_2(t)
\,\leq\,
\cE_{21}(t)\,+\,\cE_{22}(t)\,,
\]
where $\cE_{21}$ and $\cE_{22}$ are defined as follows
\begin{equation*}
	\left\{
	\begin{array}{l}
		\displaystyle  \cE_{21}(t)\,=\,
		\left(
		\int_0^t
		\int_{\K^d}
		\left|
		\int_{\R^{2d}}
		\cM(\Tilde{\bv})
		\left(
		f^\eps(s,\bx,\bv)-f^\eps(s,\bx- \eps \Tilde{\bv}, \bv)\right)\,\dD \Tilde{\bv}\,\dD \bv
		\right|^2\,\dD \bx\,\dD s\right)^{\frac{1}{2}}
		,\\[1.1em]
		\displaystyle  \cE_{22}(t)
		\,=\,\left(\int_0^t
		\int_{\K^d}
		\left|
		\int_{\R^d}
		\cM(\bv)\rho^\eps(s,\by-\eps \bv)-
		f^\eps(s,\by-\eps \bv,\bv)
		\,\dD \bv
		\right|^2\,\dD \by\, \dD s \right)^{\frac{1}{2}}
		\,.
	\end{array}\right.
\end{equation*}
To estimate $\cE_{22}$, we divide and multiply by $\cM(\bv)$ inside the integral in $\bv$ in the definition of $\cE_{22}$ and apply Jensen's inequality, which yields
\[
\cE_{22}(t)
\,\leq\,\left(\int_0^t
\int_{\K^d\times\R^d}
\left|
\frac{\cM(\bv)\rho^\eps(s,\by-\eps \bv)-
	f^\eps(s,\by-\eps \bv,\bv)}{\cM(\bv)}
\right|^2\,\cM(\bv)\,\dD \by\,\dD \bv \,\dD s\right)^{\frac{1}{2}},
\]
then we operate the change of variable $\bx = \by-\eps\,\bv$ in the latter relation and deduce
\[
\cE_{22}(t)
\,\leq\,\cE_1(t)\,.
\]
Thanks to our estimate of $\cE_1(t)$, we obtain
\begin{equation*}
	\cE_{22}(t)
	\,\leq\,
	C\,
	\eps\,
	m_p^2
	\,
	e^{C_{\rho_0,\rho_i}
		C\,
		t}
	\,,
\end{equation*}
Now, we estimate $\cE_{21}$. First, we divide and multiply by $\cM(\bv)$ inside the integral in the definition of $\cE_{21}$ and apply Jensen's inequality, which yields
\[
\cE_{21}(t)
\,\leq\,
\left(
\int_0^t\int_{\R^d}
\LLL{
	f^\eps
	-
	\tau_{\eps\Tilde{\bv}}^{-1}\,
	f^\eps
}_{2
}^2(s)
\cM(\Tilde{\bv})\,\dD \Tilde{\bv}\,\dD s\right)^{\frac{1}{2}}\,.
\]
To bound $\LLL{
	f^\eps
	-
	\tau_{\eps\Tilde{\bv}}^{-1}\,
	f^\eps
}_{2
}$, we distinguish two cases. When, $\ds\left|\eps\,\Tilde{\bv}\right|\,>\,1$ we use the triangular inequality, which ensures
\[
\LLL{
	f^\eps
	-
	\tau_{\eps\Tilde{\bv}}^{-1}\,
	f^\eps
}_{2
}(s)
\,\leq\,2\,|\eps\Tilde{\bv}|^\beta\,
\,\LLL{f^\eps}_2(s)
\,,
\]
whereas in the case $\ds\left|\eps\,\Tilde{\bv}\right|\,\leq\,1$, it holds
\[
\LLL{
	f^\eps
	-
	\tau_{\eps\Tilde{\bv}}^{-1}\,
	f^\eps
}_{2
}(s)
\,\leq\,|\eps\Tilde{\bv}|^\beta
\,\sup_{\left|\bx_0\right|\leq1}\left|\bx_0\right|^{-\beta}
\LLL{
	\tau_{\bx_0}f^\eps-f^\eps
}_2(s)
\,.
\]
According to these estimates, we deduce
\[
\cE_{21}(t)
\,\leq\,
\eps^{\beta}
\left(
\int_0^t 4\,\LLL{f^\eps}_2^2(s)
\,+\,
\left(
\sup_{\left|\bx_0\right|\leq1}\left|\bx_0\right|^{-\beta}
\LLL{
	\tau_{\bx_0}f^\eps-f^\eps
}_2(s)\right)^2\,\dD s\right)^{\frac{1}{2}}\,,
\]
where we used that $ \int_{\R^d} |\Tilde{\bv}|^{2\beta}\cM(\Tilde{\bv})\,\dD \Tilde{\bv}\,\leq\, 1$. Then we apply item \eqref{estimee norme Lp f eps} in Proposition \ref{a:priori:f:eps} to estimate the norm of $f^\eps$ and Proposition \ref{equicontinuity f eps} to estimate the norm of $\tau_{\bx_0}f^\eps-f^\eps$. It yields
\[
\cE_{21}(t)
\,\leq\,
\eps^{\beta}
\left(
\int_0^t C\,m_p^2\,
e^{C_{\rho_0,\rho_i}
	C\,
	s}
\,+\,
m_p^2\exp{\left(C_{\rho_0,\rho_i}^{-1}C\, m_p^2\,e^{C_{\rho_0,\rho_i}C\,s}\right)}\dD s\right)^{\frac{1}{2}}\,,
\]
for all time $t$ less than $T^\eps$, where $T^\eps$ is given in  Theorem \ref{main:th}. Hence, we deduce
\[
\cE_{21}(t)
\,\leq\,\eps^{\beta}
\left(C_{\rho_0,\rho_i}^{-1/2}\,m_p\,
e^{C_{\rho_0,\rho_i}
	C\,
	t}
\,+\,
\exp{\left(C_{\rho_0,\rho_i}^{-1}C\, m_p^2\,e^{C_{\rho_0,\rho_i}C\,t}\right)}
\right)
\,,
\]
where we used the following inequality to estimate the time integral of the double exponential term
\[
m_p^2\,\exp{\left(C_{\rho_0,\rho_i}^{-1}C\, m_p^2\,e^{C_{\rho_0,\rho_i}C\,t}\right)}
\,\leq\,
\frac{1}{C^2}\,
\frac{\dD}{\dD t}
\exp{\left(C_{\rho_0,\rho_i}^{-1}C\, m_p^2\,e^{C_{\rho_0,\rho_i}C\,t}\right)}\,.
\]
Gathering the estimate on $\cE_1$, $\cE_2$ and $\cE_3$, we obtain the estimate in Theorem \ref{main:th}, that is
\[
\left\|
f^\eps
\,-\,
\rho\,\cM
\right\|_{L^2
	\left(
	[0,t]\,,\,
	L^{2}
	\left(
	\cM
	\right)
	\right)
}
\,\leq\,
\eps^{\beta}
\left(
C\,m_p^{2\,p'}\,
e^{C_{\rho_0,\rho_i}
	C\,
	t}
\,+\,
\exp{\left(C_{\rho_0,\rho_i}^{-1}C\, m_p^2\,e^{C_{\rho_0,\rho_i}C\,t}\right)}
\right)
\,,
\]
for all time $t$ less than $T^\eps$, where $T^\eps$ is given in Theorem \ref{a:priori:f:eps}.\\
\section{Conclusion}
We have proposed a method in order to treat the diffusive scaling for the VPFP model. Our approach provides non-perturbative strong convergence results with explicit rates. It may be regarded as an alternative to compactness methods relying on averaging lemmas widely used in this context \cite{Masmoudi_Tayeb07,El_Ghani_Masmoudi,Ghani10,Wu_Lin_Liu15,Herda16} with the advantage that it provides explicit convergence rates. \\

An interesting and challenging continuation of this work would consist in conciliating this approach with hypocoercivity methods \cite{Herau_Thomann16,Herda_Rodrigues, Lanoir/Dolbeault/Xingyu/M.Lazhar} which present the advantage of providing global in time convergence estimates but which fail, for now, to provide non-perturbative results. To be noted that up to our knowledge, non-perturbative results \cite{Bouchut_Dolbeault95,Bonilla_Carrillo_Soler97}  treating the long time behavior of \eqref{kinetic VPFP} rely on compactness arguments and are thus non quantitative. Therefore, associating hypocoercivity methods with the one presented in this article might be a way to treat simultaneously the diffusive regime $\eps\rightarrow 0$ and the long time behavior $t\rightarrow+\infty$ in a non-perturbative framework and with explicit rates. \\

Another natural question concerns the applicability of our approach to treat other asymptotic limits of the VPFP model. For example, our method might be applicable to "free-field" regimes analyzed by M. Herda and M. Rodrigues in \cite{Herda_Rodrigues}. These regimes correspond to the limit $\tau\rightarrow 0$ when $\eps^2$ is replaced with $\tau\eps$ in the right hand of the first line in \eqref{kinetic VPFP} and under the assumption $\tau=o(\eps)$ as $\tau\rightarrow 0$. Roughly speaking, these regimes describe situations where collisions are strong enough to cancel electrostatic effects. \\
Adapting our approach to the famous high-field or hyperbolic regime also constitutes a great challenge. This regime corresponds to a situation where collisions and electrostatic effects have the same magnitude, leading to unthermalized asymptotic limits. This regime has drawn intense interest of the mathematics community \cite{Poupaud92,Cercignani_Gamba_Levermore97,Abdallah_Chaker01,Bonilla_Soler01,Nieto_Poupaud_Soler01,Goudon_Nieto_Poupaud_Soler05,Bostan_Goudon08}. In this case, smoothing effects due to the Fokker-Planck operator disappear in the limit. For this reason, there is no clear indication that our method would apply. However, we also mention the article \cite{Bellouquid_Calvo_Nieto_Soler13} in which intermediate regimes are considered, where collisions slightly dominate electrostatic effects. If possible, applying our method to these intermediate regimes would constitute a first step towards treating the high-field limit. \\

To conclude, it would be interesting to test the robustness of our method on other collision operators such as linearized Boltzmann operators \cite{Poupaud91,Abdallah_Tayeb04,Masmoudi_Tayeb07} or BGK relaxation operators \cite{Cercignani_Gamba_Levermore97}. In these examples, we expect less smoothing effects than in the Fokker-Planck case, leading to additional difficulties.

\appendix

\section{Proof of Proposition \ref{estimee rho}
}\label{proof:prop:rho}
Let us start with the case where exponent $p$ is strictly less than $+\infty$. We compute the time derivative of 
$
\ds
\|
\rho
\|_{L^{p}}^p 
$ by multiplying equation \eqref{ddP equation} by
$
\ds
\rho^{p-1}
$ and integrating with respect to $\bx$. After an integration by part, we obtain 
\[
\frac{1}{p}\,
\frac{\dD}{\dD t}\,
\left\|\,
\rho\,
\right\|_{L^{p}
}^p\,+\,
\Delta_{p}
\left[\,
\rho\,
\right]
\,=\,
\frac{p-1}{p}\,\cD\,,
\]
where $\cD$ is given by
\[
\cD\,=\,
\int_{\K^d}
\bE\cdot
\nabla_{\bx}\rho^{\,p}\,
\dD\bx\,.
\]
Integrating by part and according to equation \eqref{ddP equation}, we rewrite $\cD$ as follows
\[
\cD\,=\,
\int_{\K^d}
\left(
\rho_i\,-\,\rho
\right)\rho^{\,p}\,
\dD\bx\,=\,
\int_{\K^d}
\rho_i\,\rho^{\,p}\,
\dD\bx\,-\,\|\rho\|_{L^{p+1}}^{p+1}
\,.
\]
To estimate the $\cD$, we apply H\"older's inequality
\[
\cD\,\leq\,
\|\rho_i\|_{L^{p+1}}\,
\|\rho\|_{L^{p+1}}^{p}
\,-\,
\|\rho\|_{L^{p+1}}^{p+1}\,,
\]
and therefore deduce
\[
\cD\,\leq\,
\|\rho_i\|_{L^{p+1}}^{p+1}\,
\mathds{1}_{
	\|\rho\|_{L^{p+1}}
	\,\leq\,
	\|\rho_i\|_{L^{p+1}}}\,.
\]
Furthermore, we use that $\rho$ is a probability measure to apply Jensen's inequality and we deduce
\[
\|\rho\|_{L^{p}}
\,\leq\,
\|\rho\|_{L^{p+1}}^{(p^2-1)/p^2}\,.
\]
Injecting this inequality in the latter estimate on $\cD$, we obtain
\[
\cD\,\leq\,
\|\rho_i\|_{L^{p+1}}^{p+1}\,
\mathds{1}_{
	\|\rho\|_{L^{p}}
	\,\leq\,
	\|\rho_i\|_{L^{p+1}}^{(p^2-1)/p^2}}\,.
\]
Therefore, we obtain
\[
\frac{\dD}{\dD t}\,
\left\|\,
\rho\,
\right\|_{L^{p}
}^p
\,\leq\,
(p-1)\,
\|\rho_i\|_{L^{p+1}}^{p+1}\,
\mathds{1}_{
	\|\rho\|_{L^{p}}^p
	\,\leq\,
	\|\rho_i\|_{L^{p+1}}^{(p^2-1)/p}}\,.
\]
One can check that for any positive $\eta$, the constant
\[
C_{\eta}\,=\,
\max{
	\left(
	\|\rho_0\|_{L^{p}}^p
	\,,\,
	\|\rho_i\|_{L^{p+1}}^{(p^2-1)/p}
	\right)
}
\,+\,\eta\,
\]
is a super solution to the latter differential inequality. Therefore, it holds for all $\eta>0$
\[
\left\|\,
\rho\,
\right\|_{L^{p}
}^p
\,\leq\,C_{\eta}\,.
\]
Hence, taking $\eta \rightarrow 0$, we obtain the expected result
\[
\left\|\,
\rho\,
\right\|_{L^{p}
}
\,\leq\,
\max{
	\left(
	\|\rho_0\|_{L^{p}}
	\,,\,
	\|\rho_i\|_{L^{p+1}}^{1-1/p^2}
	\right)
}\,,
\]
for all time $t\,\geq\,0$. The case $p\,=\,+\infty$ is obtained taking the limit $p\,\rightarrow\,+\infty$ in the latter estimate.\\

\bibliographystyle{siamplain}
\bibliography{refer}
\end{document}